\documentclass[10pt, reqno]{amsart}
\usepackage{amsmath}
\usepackage{amsthm, color}
\usepackage{amsfonts}
\usepackage{amssymb}
\usepackage{mathrsfs}
\usepackage[margin=.95in]{geometry}
\usepackage{float}
\usepackage{set space}
\usepackage[dvipsnames]{xcolor}
\usepackage{tikz}
\usetikzlibrary{matrix, arrows, quotes, calc, intersections, automata, positioning}
\usepackage{tikz-cd}
\usepackage[inline]{enumitem}
\usepackage[colorlinks]{hyperref}
\hypersetup{linkcolor=black,citecolor=blue,filecolor=black,urlcolor=black} 
\usepackage{pdflscape}
\usepackage{array}
\usepackage{adjustbox}
\usepackage{tabularx}
\usepackage{multirow}
\usepackage{multicol}
\usepackage{arydshln}
\usepackage{comment}
\usepackage{ifthen}
\usepackage{caption}
\usepackage{bbm}

\textwidth=125mm
\textheight=185mm
\parindent=8mm
\evensidemargin=0pt
\oddsidemargin=0pt
\frenchspacing


\renewcommand{\a}{\mathfrak{a}}
\newcommand{\abs}[1]{\left\vert #1 \right\vert}

\newcommand{\p}{\mathfrak{p}}

\renewcommand{\bar}[1]{\overline{#1}}
\DeclareMathOperator{\BC}{BC}               

\newcommand{\CC}{\mathbb{C}}

\DeclareMathOperator{\Chow}{CH}             

\DeclareMathOperator{\cl}{cl}		
\DeclareMathOperator{\coker}{Coker}

\newcommand{\dd}{\partial}

\renewcommand{\emptyset}{\varnothing}
\renewcommand{\epsilon}{\varepsilon}

\newcommand{\exterior}{\bigwedge}	        

\newcommand{\GMA}[1]{\mathsf{B}_{#1}}

\DeclareMathOperator{\HF}{\mathrm{HF}}
\DeclareMathOperator{\HS}{H}

\newcommand{\Iff}{\Leftrightarrow}

\newcommand{\Implies}{\Rightarrow}

\DeclareMathOperator{\init}{in}				
\newcommand{\iso}{\cong}
\DeclareMathOperator{\Ker}{Ker}
\renewcommand{\ker}{\Ker}
\newcommand{\kk}{\mathbbm{k}}					        
\renewcommand{\L}{\mathcal{L}}


\newcommand{\m}{\mathfrak{m}}

\newcommand{\onto}{\twoheadrightarrow}
\DeclareMathOperator{\OrlikSolomon}{\mathsf{A}}           		
\newcommand{\OS}[1]{\OrlikSolomon_{#1}}
\DeclareMathOperator{\Poin}{P}				
\renewcommand{\P}{\Poin}
\renewcommand{\phi}{\varphi}

\DeclareMathOperator{\rank}{rank}
\DeclareMathOperator{\rk}{rk}

\renewcommand{\setminus}{\smallsetminus}
\DeclareMathOperator{\si}{si}           

\newcommand{\term}[1]{\textbf{\textsf{#1}}}
 			
\newcommand{\tensor}{\otimes}

\DeclareMathOperator{\Tor}{Tor}

\newcommand{\ZZ}{\mathbb{Z}}

\newcommand{\graph}[3][scale = 1.2]{
	\begin{tikzpicture}[#1]          
	\newcommand*\points{#2}     
	\newcommand*\edges{#3}          
	\newcommand*\scale{0.75}          
	\foreach \x/\y/\l/\v/\a in \points {
		\draw[fill = black!50] (\scale*\x,\scale*\y) circle [radius = 0.1] node[label = {[label distance = 0.05 cm]\a: $\v$}] (\l) {}; 
	}
	\foreach \x/\y in \edges { \draw (\x) -- (\y); }      
	\end{tikzpicture}
}

\newcommand{\edgelabeledgraph}[3][scale = 1.2]{
	\begin{tikzpicture}[#1]          
	\newcommand*\points{#2}     
	\newcommand*\edges{#3}          
	\newcommand*\scale{0.75}          
	\foreach \x/\y/\z/\w/\a in \points {
		\draw[fill = black!50] (\scale*\x,\scale*\y) circle [radius = 0.1] node[label = {[label distance = 0.05 cm]\a: $\w$}] (\z) {}; 
	}
	\foreach \x/\y/\p/\a/\l in \edges { \draw (\x) -- (\y) node [pos = \p, sloped, \a] {\textcolor{WildStrawberry}{\small $\l$}}; }      
	\end{tikzpicture}
}

\newcommand{\lattice}[3][]{
	\begin{tikzpicture}[#1]          
	\newcommand*\points{#2}     
	\newcommand*\edges{#3}          
	\newcommand*\scale{0.015}          
	\foreach \x/\y/\z/\w in \points {
		\node (\w) at (\x, \y) {$\z$}; 
	}
	\foreach \x/\y in \edges { \draw (\x) -- (\y); }      
	\end{tikzpicture}
}
	
\setlength{\textwidth}{6.7 true in}


\newtheorem{thm}{Theorem}[section]
\newtheorem{lemma}[thm]{Lemma}
\newtheorem{prop}[thm]{Proposition}
\newtheorem{cor}[thm]{Corollary}
\newtheorem{mainthm}{Theorem}

\newtheorem{prob}[thm]{Open Problem}

\theoremstyle{definition}
\newtheorem{defn}[thm]{Definition}
\newtheorem{example}[thm]{Example}

\numberwithin{equation}{section}
\numberwithin{figure}{section}


\title[Koszul Graded M\"obius algebras]{Koszul Graded M\"obius Algebras \\ and Strongly Chordal Graphs} 
\date{\today}
\author[A. LaClair]{Adam LaClair}
\address{Purdue University, Department of Mathematics, West Lafayette, IN, USA}
\email{alaclair@purdue.edu}
\author[M. Mastroeni]{Matthew Mastroeni}
\address{SUNY Polytechnic Institute, Utica, NY, USA}
\email{mastromn@sunypoly.edu}
\author[J. McCullough]{Jason McCullough}
\address{Iowa State University, Department of Mathematics, Ames, IA, USA}
\email{jmccullo@iastate.edu}
\author[I. Peeva]{Irena Peeva}
\address{Cornell University, Department of Mathematics, Ithaca, NY, USA}
\email{ivp1@cornell.edu}

\begin{document}

\subjclass[2020]{Primary: 16S37, 13E10, 05B35; Secondary: 13P10, 05E40, 05C25}

\keywords{Koszul algebra, graded M\"obius algebra, Chow ring, matroid, lattice, chordal and strongly chordal graphs}

\begin{abstract} The graded M\"{o}bius algebra of a matroid is a  commutative graded algebra which encodes the combinatorics of the lattice of flats of the matroid.  As a special subalgebra of the augmented Chow ring of the matroid, it plays an important role in the recent proof of the Dowling-Wilson Top Heavy Conjecture. Recently, Mastroeni and McCullough proved that the Chow ring and the augmented Chow ring of a matroid are Koszul.  We study when graded M\"obius algebras are Koszul.
We characterize the Koszul graded M\"obius algebras  of cycle matroids of graphs in terms of properties of the graphs. 
Our results yield a new characterization of strongly chordal graphs via edge orderings.
\end{abstract}

\maketitle

\begin{spacing}{1.1}

\section{Introduction}

\noindent  Given a field $\kk$, a standard graded $\kk$-algebra $A = \bigoplus_{i \ge 0}A_i$ with homogeneous maximal ideal $\m = \bigoplus_{i > 0} A_i$ is called \term{Koszul} if $A/\m \cong \kk$ has a linear free resolution over $A$.  Koszul algebras were formally defined by Priddy \cite{Priddy} as a way of unifying free resolution constructions in topology and representation theory, and they have been studied for their extraordinary homological and duality properties ever since.  They appear in many areas of algebra, geometry, and topology, such as coordinate rings of canonical curves, coordinate rings of Grassmannians in their Pl\"ucker embeddings, and sufficiently high Veronese subalgebras of any standard graded algebra.  We refer the reader to 
\cite{Koszul:algebras:and:regularity}, 
 \cite{Koszul:algebras:and:their:syzygies}, \cite{Froberg:Koszul:algebras:survey}, and \cite{PP}  for expository overviews.  
 
 This paper studies under what conditions the graded M\"obius algebra of a matroid is Koszul.  
  Let $M$ be a simple matroid with finite ground set $E$ and lattice of flats $\L$.  The \term{graded M\"obius algebra} of $M$ is the commutative ring 
\[ \GMA{M} = \bigoplus_{F \in \L} \kk y_F \]
having the elements $y_F$ for each flat $F$ of $M$ as a $\kk$-basis, with multiplication defined by
\[ y_Fy_G = \left\{\begin{array}{cl} y_{F \vee G}, & \text{if}\; \rk(F \vee G) = \rk F + \rk G \\[1 ex] 0, & \text{otherwise.}\end{array}\right. \] 
 With this multiplication, $\GMA{M}$ is a standard graded algebra with grading induced by the rank function of the lattice. The algebra has also been denoted by $B^\ast(M)$ in \cite{HW17}, by $Q/J_M$ in \cite{Maeno:Numata:published}, and by $\mathrm{H}(M)$ in \cite{semi-small:decompositions}.  Note that this algebra is distinct from the (ungraded) M\"obius algebra of Greene \cite{Greene73} or Solomon \cite{Solomon67}, which omits the rank condition.

Graded M\"obius algebras were defined in \cite{singular:Hodge:theory:for:combinatorial:geometries} and  \cite{semi-small:decompositions} as an algebraic tool for resolving the longstanding  Dowling-Wilson Top Heavy Conjecture concerning the numbers of flats of a given rank in a matroid.  In the representable case, the graded M\"obius algebra  is isomorphic to the cohomology ring of the associated matroid Schubert variety studied by Ardila and Boocher \cite{Ardila-Boocher:Schubert:varieties}; see \cite[Theorem 14]{HW17} and \cite[Section 1.3]{singular:Hodge:theory:for:combinatorial:geometries} for precise statements. By construction, the Hilbert function of $\GMA{M}$ records exactly the number of flats of $M$ of a given rank, called the Whitney numbers of the second kind. As the graded M\"obius algebra of $M$ embeds as a subalgebra of its augmented Chow ring $\Chow(M)$ \cite[2.15]{semi-small:decompositions}, the proof of the Top-Heavy Conjecture comes down to interpolating between the combinatorics of the graded M\"obius algebra and the good algebraic properties of the augmented Chow ring:  Poincar\'e duality, the Hard Lefschetz Theorem, and the Hodge-Riemann relations.  

Dotsenko had conjectured \cite{Dotsenko} that the Chow ring of any matroid should   be a Koszul algebra.  It is well-known that every Koszul algebra has a presentation with a defining ideal generated by quadratic forms; such algebras are called \term{quadratic}.  However, not all quadratic algebras are Koszul (for example, see Section \ref{open:problems}).  By far the most common way of proving that a quotient of a polynomial ring or exterior algebra is Koszul is to show that its defining ideal has a quadratic Gr\"{o}bner basis, possibly after a suitable linear change of coordinates; such algebras are called \term{G-quadratic}.  This was the approach used by Dotsenko to prove that the cohomology rings of the moduli spaces of stable rational marked curves are Koszul.  
Both these rings and the Chow rings of matroids fit into a larger framework of Chow rings associated to atomic lattices and building sets studied by Feichtner and Yuzvinksy \cite{Chow:rings:of:atomic:lattices}, which was the basis for Dotsenko's conjecture.    
Coron \cite{Coron} subsequently generalized Dotsenko's result to so-called supersolvable built lattices, proving that the associated Chow rings have quadratic Gr\"obner bases.  

In some rare cases, having a quadratic Gr\"{o}bner basis characterizes the Koszul property; this is known to hold for canonical rings of curves \cite{Groebner:flags}, Hibi rings of posets \cite{HH87}, toric edge rings of bipartite graphs \cite{OH},   quadratic Gorenstein rings of regularity 2 \cite{Groebner:flags}, and Orlik-Solomon algebras of graphic matroids \cite{broken:circuit:complexes}.  However, if no quadratic Gr\"{o}bner basis can be found, then proving Koszulness is usually quite challenging, see  \cite{Ca} and \cite{CC}.  In particular, Mastroeni and McCullough \cite{Chow:rings:of:matroids:are:Koszul} proved Dotsenko's conjecture for both the augmented and un-augmented versions of the Chow ring of a matroid by constructing a special family of ideals known as a Koszul filtration.

In contrast with the situation for Chow rings, the algebraic properties of graded M\"obius algebras have been less studied.  Graded M\"obius algebras first appeared as objects of secondary interest in work of Maeno and Numata \cite{Maeno:Numata:published}, who studied the Sperner property of modular lattices via certain Artinian Gorenstein rings defined via Macaulay inverse systems.  A consequence of their work is that $\GMA{M}$ is Gorenstein if and only if the lattice of flats of $M$ is modular \cite[5.7]{Maeno:Numata:published}.   The question of the Koszul property was raised by Ferroni et.~al.~\cite{FMSV22}, who observed that, in contrast to Chow rings of matroids, not all graded M\"obius algebras are Koszul.  {\it Currently, it is an open problem when the graded M\"obius algebra of a matroid is quadratic or Koszul.}

We  review some relevant background in Section \ref{background}.
In Section \ref{quadracity:and:GB's}, we give an explicit presentation for the graded M\"obius algebra of a matroid as well as Gr\"obner bases for its defining ideal.  This presentation (Proposition~\ref{GMA:presentation}) bears a number of similarities with the presentation of the much better studied Orlik-Solomon algebra $\OS{M}$ of a matroid $M$. In Sections \ref{MAT-labelings} and \ref{proof:of:main:thm}, we specialize to the case of graphic matroids to obtain sharper results.  We prove that  the graded M\"obius algebra $\GMA{G}$ of the cycle matroid $M(G)$ of a graph $G$ is quadratic if and only if $G$ is chordal (Theorem~\ref{quadratic:GMA:iff:chordal}).  An analogous statement is known to hold for the Orlik-Solomon algebra $\OS{G}$ of $M(G)$, and is further equivalent to $\OS{G}$ being Koszul.  On the other hand, characterizing when the graded M\"obius algebra is Koszul is difficult even in the graphic case; one needs something stronger than chordality to ensure the Koszul property.  Our main theorem 
is:

\begin{mainthm}
Let $G$ be a graph with cycle matroid $M(G)$, and let $\GMA{G}$ denote the graded M\"obius algebra of $M(G)$.  The following are equivalent:
\begin{enumerate}[label = \textnormal{(\alph*)}]
    \item $M(G)$ is strongly T-chordal.
    \item $\GMA{G}$ has a quadratic Gr\"obner basis.
    \item $\GMA{G}$ is Koszul.
    \item $G$ is strongly chordal.
\end{enumerate}
\end{mainthm}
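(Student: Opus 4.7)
My plan is to establish the cycle of implications (d) $\Rightarrow$ (a) $\Rightarrow$ (b) $\Rightarrow$ (c) $\Rightarrow$ (d). The implication (b) $\Rightarrow$ (c) is the standard general fact that a quadratic Gr\"obner basis of the defining ideal forces Koszulness, so it requires no work beyond invoking the presentation of $\GMA{G}$ from Proposition \ref{GMA:presentation}. The implication (c) $\Rightarrow$ quadratic is also immediate, and then Theorem \ref{quadratic:GMA:iff:chordal} gives $G$ chordal as a first consequence of (c); the real content of (c) $\Rightarrow$ (d) is the upgrade from chordal to strongly chordal.

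For (d) $\Rightarrow$ (a), I would translate between the graph-theoretic notion of strong chordality and the matroid-theoretic notion of strong T-chordality for $M(G)$. Classical characterizations of strongly chordal graphs via simple elimination orderings of vertices, or equivalently via strong perfect elimination orderings of edges, should match naturally with the lattice-of-flats formulation of strong T-chordality, since the flats of $M(G)$ of rank one correspond to edges and the lattice structure is determined by vertex partitions coming from connected components. For (a) $\Rightarrow$ (b), I would use the edge/flat ordering guaranteed by strong T-chordality together with the explicit presentation of $\GMA{G}$ from Proposition \ref{GMA:presentation} to construct a monomial order under which the natural quadratic relations form a Gr\"obner basis. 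Verifying this is a matter of showing that all $S$-pairs among the generators reduce to zero, and the combinatorial hypothesis from the MAT-labeling/strong elimination setup in Section \ref{MAT-labelings} should supply the bookkeeping needed to eliminate any potentially higher-degree leading terms.

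The hardest implication is (c) $\Rightarrow$ (d), where I need to rule out chordal graphs that are not strongly chordal. My plan is to argue by contrapositive via Farber's forbidden configuration theorem: a chordal graph fails to be strongly chordal precisely when it contains an induced $k$-sun for some $k \geq 3$. I would then exhibit an obstruction to Koszulness in $\GMA{G}$ created by such a sun, most likely by producing a nonzero element of $\Tor_i^{\GMA{G}}(\kk,\kk)_j$ with $j > i$ for some small $i$ (presumably $i = 3$). Two possible approaches: a direct chain-level calculation in a carefully chosen subcomplex of the bar or Koszul complex tailored to the sun's flats, or a restriction argument passing to a quotient algebra supported on flats inside the sun, reducing the problem to showing non-Koszulness of an explicit small algebra by a computer algebra check. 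The core intuition is that the ``missing'' edges/chords responsible for the failure of strong chordality produce a cycle of relations among rank-$1$ generators that cannot be untangled by linear syzygies, and the main technical work lies in making this intuition precise and uniform across all sun sizes $k \geq 3$.
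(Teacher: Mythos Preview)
Your outline for (d) $\Rightarrow$ (a) $\Rightarrow$ (b) $\Rightarrow$ (c) matches the paper's approach and is fine. The genuine gap is in (c) $\Rightarrow$ (d). Your first idea, finding a nonzero $\Tor_i^{\GMA{G}}(\kk,\kk)_j$ with $j>i$ at some fixed small $i$ (you guess $i=3$), does not work: for the $n$-trampoline the minimal resolution of $\kk$ is in fact linear for the first $n$ steps, and the first nonlinear Betti number occurs only at homological degree $n+1$ (see the computations in Section~\ref{open:problems}). So there is no uniform small-degree obstruction across all suns, and any bar- or Koszul-complex argument pitched at a fixed $i$ will miss the target once $n$ is large enough. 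Your backup idea, passing to a quotient and doing a computer check on ``an explicit small algebra,'' also falls short: the retract argument (which the paper does use) only reduces to $\GMA{T_n}$, and that algebra still depends on $n$, so a single finite verification cannot close the loop.

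The paper handles this by an induction on $n$ that you do not anticipate. After retracting from $\GMA{G}$ to $\GMA{T_n}$, it relates $T_n$ to the broken trampoline $B_n = T_n \setminus w_n$ via a second retract and a Poincar\'e-series identity coming from Levin's large-homomorphism machinery and a Nagata idealization (Lemma~\ref{poincare:lemma}). Crucially, $B_n$ \emph{is} strongly chordal, so $\GMA{B_n}$ is Koszul by the already-proved direction; then Koszulness of $\GMA{T_n}$ is shown to force $\GMA{B_n}/(0:_{\GMA{B_n}} e_{1,n})$ to be Koszul, and this quotient is identified with $\GMA{T_{n-1}}$ via Lemma~\ref{GMAs:and:contraction}. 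The base case $n=3$ is dispatched by an explicit coordinate change and a hands-on colon-ideal computation showing a minimal quadratic (hence nonlinear) generator in the relevant annihilator. The ingredients you are missing are this inductive bridge through broken trampolines and the Poincar\'e-series control that makes the induction go; without them the contrapositive argument cannot be completed uniformly in $n$.
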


The definitions of T-chordal and strongly chordal are given in Sections~\ref{quadracity:and:GB's} and \ref{MAT-labelings}, respectively.
The implications (a) $\Implies$ (b) and (d) $\Implies$ (a) follow from Theorem \ref{GMA:quadratic:GB} and Corollary~\ref{broken:trampolines:are:G-quadratic}, respectively. It is well known that (b) $\Implies$ (c).  The implication (c) $\Implies$ (d) is established in
Theorem~\ref{Koszul:imples:strongly:chordal}.

We summarize in
Figure~\ref{fig1} and
Figure~\ref{graphic:quadracity:implications} the relationships between the quadracity and Koszul properties of Orlik-Solomon algebras and graded M\"obius algebras, and the various notions of chordality; the former figure covers the case of general matroids, and the latter is specific to graphic matroids.

As a byproduct of our work on Koszul graded M\"obius algebras, we obtain a new characterization of strongly chordal graphs in terms of edge orderings: 

\begin{mainthm}[Theorem \ref{strongly:chordal:implies:strongly:T-chordal}]
A graph $G$ is strongly chordal if and only if there is a total order $\prec$ on the edges of $G$ with the property that    for every cycle $C$ of size at least four in $G$ and every edge $e \in C \setminus \min_\prec C$, there is a chord $c$ of $C$ and edges $a, b \in C \setminus e$ such that $\{a, b, c\}$ is a 3-cycle with $c \succ \min(a, b)$.
\end{mainthm}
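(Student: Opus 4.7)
The plan is to derive Theorem~B as the graph-theoretic content of the equivalence $(\mathrm{a}) \Leftrightarrow (\mathrm{d})$ in Theorem~A. The notion of a strongly T-chordal matroid, defined in Section~\ref{quadracity:and:GB's} in terms of an ordering on the ground set together with a triangle-forming condition on circuits, specializes cleanly to the cycle matroid $M(G)$: the ground set is the edge set of $G$, the circuits are the cycles of $G$, and a direct comparison of definitions shows that ``$M(G)$ is strongly T-chordal'' is equivalent to the existence of an edge ordering $\prec$ on $G$ satisfying property~$(*)$ of Theorem~B. Under this translation, Theorem~B is simply $(\mathrm{a}) \Leftrightarrow (\mathrm{d})$ restricted to the graphic case.

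With this identification in hand, both directions are already packaged in the chain of implications supporting Theorem~A. For the implication (existence of $\prec$) $\Rightarrow$ ($G$ strongly chordal), one chains: existence of $\prec$ $\Rightarrow$ $M(G)$ is strongly T-chordal $\Rightarrow$ $\GMA{G}$ has a quadratic Gr\"obner basis (Theorem~\ref{GMA:quadratic:GB}) $\Rightarrow$ $\GMA{G}$ is Koszul (standard) $\Rightarrow$ $G$ is strongly chordal (Theorem~\ref{Koszul:imples:strongly:chordal}). Conversely, Corollary~\ref{broken:trampolines:are:G-quadratic} provides the explicit construction of a MAT-labeling on a strongly chordal graph, and such a labeling is by definition an edge ordering satisfying $(*)$.

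A direct combinatorial proof of the forward direction would begin with a strong perfect elimination ordering $v_1, v_2, \ldots, v_n$ of the vertices of $G$ and build an edge ordering by declaring $uv \prec u'v'$ whenever $\min(u,v)$ precedes $\min(u',v')$ in the vertex order, with ties broken by $\max$. For a cycle $C$ of size at least four and a non-minimum edge $e \in C$, let $v$ be the first vertex of $C$ in the vertex ordering; the two cycle-edges $a, b$ at $v$ are then the two smallest edges of $C$. The nested neighborhood property at $v$ furnishes an edge $c$ joining the other endpoints of $a$ and $b$, forming a triangle $\{a, b, c\}$, and the endpoints of $c$ lie strictly later than $v$ in the vertex ordering, which places $c$ above $\min(a,b)$ in $\prec$. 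When $e \notin \{a, b\}$ this already verifies $(*)$; otherwise, one must repeat the argument at an appropriately chosen later vertex of $C$ whose elimination data supplies a triangle chord avoiding $e$. For the reverse direction, one notes that taking $e = \max_\prec C$ in $(*)$ forces a chord in every cycle of length $\geq 4$, so $G$ is chordal, and then strong chordality can be extracted via Farber's criterion that a chordal graph is strongly chordal if and only if every even cycle of length $\geq 6$ has an odd chord, applied inductively via repeated invocation of $(*)$.

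The main obstacle on the direct combinatorial route is the careful bookkeeping in the forward direction when the non-minimum edge $e$ happens to be one of the two lowest cycle-edges at the chosen low vertex of $C$, which forces one to use the nested-neighborhood structure at a higher vertex of $C$; and, in the reverse direction, iterating $(*)$ so that the chords extracted subdivide a long even cycle into pieces whose chords glue back into an odd chord of the original. Because the algebraic route through Theorem~A absorbs both difficulties automatically, that is the efficient path; the MAT-labeling construction underlying Corollary~\ref{broken:trampolines:are:G-quadratic} is the algorithmic content of the forward direction, and Theorem~\ref{Koszul:imples:strongly:chordal} packages the reverse.
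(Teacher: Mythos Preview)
Your backward direction (existence of $\prec$ $\Rightarrow$ $G$ strongly chordal) via the algebraic chain (a)$\Rightarrow$(b)$\Rightarrow$(c)$\Rightarrow$(d) is correct, and the paper explicitly acknowledges this route before giving the direct combinatorial argument of Theorem~\ref{SEEO:implies:strongly:chordal}, which instead uses Farber's forbidden-trampoline characterization.

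The forward direction, however, has a genuine gap. You propose to obtain it from Corollary~\ref{broken:trampolines:are:G-quadratic}, writing that the corollary ``provides the explicit construction of a MAT-labeling on a strongly chordal graph, and such a labeling is by definition an edge ordering satisfying $(*)$.'' Neither clause is accurate. The existence of a MAT-labeling on a strongly chordal graph is a result of Tran--Tsujie, not of the corollary; and, more importantly, a MAT-labeling is \emph{not} by definition a strong edge elimination order. The axioms (ML1)--(ML3) and the MAT-triple condition in Definition~\ref{strongly:T-chordal} are formally quite different, and showing that any total order refining a MAT-labeling satisfies $(*)$ is exactly the content of Theorem~\ref{strongly:chordal:implies:strongly:T-chordal}. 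The paper's proof is not a triviality: it inducts on the clique number $\omega(G)$, uses Theorem~\ref{GMA:quadratic:GB}(d) to reduce to $4$-cycles, and then carries out a case analysis according to whether the vertices of the $4$-cycle form a clique, invoking the MAT-simplicial vertex machinery to force the inequality $\lambda(c) < \max\{\lambda(a),\lambda(b)\}$ in the non-clique case. Since Corollary~\ref{broken:trampolines:are:G-quadratic} is itself a consequence of Theorem~\ref{strongly:chordal:implies:strongly:T-chordal}, invoking it here is circular.

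Your sketched direct argument from a strong vertex elimination order is a different and plausible attack, but as you note, the case where the removed edge $e$ is one of the two minimal edges at the lowest vertex of $C$ is where the real work lies, and that step is not carried out.
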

 
  At the end of the paper, we raise some open problems.

\section{Background} \label{background}

\noindent In this section, we review some relevant background material on matroids, Hilbert functions, and free resolutions.

\subsection{Matroids}\label{matroids}

A \term{matroid} is a pair $M = (E,\mathcal{I})$ consisting of a finite set $E$, called the \term{ground set} of $M$, and a collection $\mathcal{I}$ of subsets of $E$ satisfying three properties:
\begin{enumerate}
    \item $\varnothing \in \mathcal{I}$.
    \item If $I \in \mathcal{I}$ and $I' \subseteq I$, then $I' \in \mathcal{I}$.
    \item If $I_1,I_2 \in \mathcal{I}$ and $|I_1| < |I_2|$, then there exists an element $e \in I_2 \smallsetminus I_1$ such that $I_1 \cup e \in \mathcal{I}$.
\end{enumerate}
We will often write subsets $\{e_1, e_2, \dots, e_s\} \subseteq E$ as $e_1e_2\cdots e_s$ when there is no chance for confusion.  Thus, if $F \subseteq E$, we will frequently write $F \cup e$ and $F \setminus e$ in place of $F \cup \{e\}$ and $F \setminus \{e\}$ respectively.

The members of $\mathcal{I}$ are called \term{independent sets} of $M$.  A maximal independent set is called a \term{basis}.  A subset of $E$ that is not in $\mathcal{I}$ is called \term{dependent}.  A minimal dependent set of $M$ is called a \term{circuit}.   
All bases of a matroid have the same cardinality, called the \term{rank} of $M$.  Given a subset $X \subseteq E$, the \term{rank} of $X$, denoted $\rk_M X$, is the cardinality of the largest independent set contained in $X$; we drop the subscript when the matroid is clear from context.  The \term{closure} of a subset $X \subseteq E$ in $M$ is 
\[ \cl(X) = \{e \in E \mid \rk(X \cup e) = \rk X \}.\]  
A subset $F \subseteq E$ is called a \term{flat} of $M$ if $F = \cl(F)$. 
The set  of all flats of $M$ ordered by inclusion is a lattice denoted by $\L(M)$; for any two flats $F,G \in \L(M)$, the meet is the intersection, $F \wedge G = F \cap G$, and the join is the closure of the union, $F \vee G = \cl(F \cup G)$.  Matroids can also be characterized by their rank functions, by their bases, or by their circuits.

\begin{example} \label{lattice:of:flats}
    Every graph $G$ determines a cycle matroid $M = M(G)$ whose ground set is the set of edges of $G$ and whose independent sets are sets of edges forming acyclic subgraphs of $G$.  By construction, the circuits of $M$ are precisely the sets of edges forming minimal cycles in the graph.  Flats of $M$ correspond to subgraphs of $G$ whose connected components are induced subgraphs.  On the right below is the lattice of flats corresponding to the cycle matroid of the graph shown on the left.
   \begin{center}   
\edgelabeledgraph{
    -1/0/1//-120,
    -1/2/2//120,
    1/2/3//60,
    1/0/4//-60
    }{
    1/2/0.5/above/a,
    2/3/0.5/above/b,
    4/3/0.5/above/e,
    1/4/0.5/below/d,
    1/3/0.5/above/c}
    \qquad
        \lattice{0/0/\emptyset/0,
    -2/1/a/1,
    -1/1/b/2,
    0/1/c/3,
    1/1/d/4,
    2/1/e/5,
    -2.5/2/abc/6,
    2.5/2/cde/7,
    -1.5/2/ad/8,
    -0.5/2/ae/9,
    0.5/2/bd/10,
    1.5/2/be/11,
    0/3/abcde/12}{0/1,0/2,0/3,0/4,0/5,1/6,2/6,3/6,3/7,4/7,5/7,1/8,4/8,1/9,5/9,2/10,4/10,2/11,5/11,6/12,7/12,8/12,9/12,10/12,11/12}
\end{center} 
\end{example}

An element $e \in E$ is a \term{loop} of $M$ if the set $\{e\}$ is dependent.  If $e,f \in E$ are not loops, then $e$ and $f$ are \term{parallel} if $\{e,f\}$ is dependent.  A matroid is \term{simple} if it has no loops and no pairs of parallel elements.  For clarity of notation, we only consider the graded M\"obius algebras of simple matroids.  However, for any matroid $M$, there is a unique (up to isomorphism) simple matroid whose lattice of flats is isomorphic to $\L(M)$, so there is no loss of generality by imposing this restriction.  This matroid, called the \term{simplification} of $M$ and denoted by $\si(M)$, is the matroid on the set of rank-one flats of $M$ such that a set of flats $\{Y_1, \dots,Y_t\}$ is independent if and only if $\rk_M(Y_1 \vee \cdots \vee Y_t) = t$.  

Let $M$ be a matroid  on a ground set $E$ and $X \subseteq E$ be a subset.  There are two constructions for producing new matroids from $M$ that will play an important role in the subsequent sections:  \vspace{1 ex}
\begin{itemize}
\item The \term{restriction} of $M$ to $X$ is the matroid $M\vert X$ on the ground set $X$ whose independents sets are precisely the independent sets of $M$ that are contained in $X$.  The flats of $M\vert X$ are of the form $F \cap X$ for some flat $F$ of $M$.  In particular, when $F$ is a flat of $M$, every flat of $M\vert F$ is also a flat of $M$ so that $M \vert F$ is a matroid quotient of $M$, and the lattice of flats of $M\vert F$ is just the interval $[\emptyset, F]$ in $\L(M)$.  \vspace{1 ex}
\item The \term{contraction} of $M$ by $X$ is the matroid $M/X$ on the ground set $E \setminus X$ whose independent sets consist of all subsets $Y \subseteq E \setminus X$ such that $Y \cup B$ is independent in $M$ for some (or equivalently, every) basis $B$ of $M\vert X$.  If $F$ is a flat of $M$, then $G \subseteq E \setminus F$ is a flat of $M/F$ if and only if $G \cup F$ is a flat of $M$ so that $M/F$ is a matroid quotient of $M$, and the lattice of flats of $M/F$ is isomorphic to the interval $[F, E]$ in $\L(M)$. \vspace{1 ex}
\end{itemize} 
We refer the reader to \cite{Welsh} and \cite{Oxley} for further details about these constructions.

A common theme in the study of matroids involves using a total order $\prec$ on the ground set of a matroid $M$ to shed light on its structure.  Given such a total order and a set $X \subseteq E$, we denote by $\min X$ the smallest element of $X$ in the chosen order.  Sets of the form $C \setminus \min C$, where $C$ is a circuit of $M$ are called \term{broken circuits} of $M$.  A set $X \subseteq E$ that does not contain any broken circuits is called an \term{nbc-set} (because they contain no broken circuits).  The collection of all nbc-sets $\BC(M, \prec)$ is easily seen to be a pure subcomplex of the simplicial complex of independent sets of $M$, called the \term{broken circuit complex} of $M$.  We refer the reader to \cite{homology:and:shellability:of:matroids} for more details about broken circuit complexes.

\subsection{Hilbert Functions and Free Resolutions}

Fix a field $\kk$. Let $A = \bigoplus_{i \ge 0}A_i$ be a  standard graded $\kk$-algebra with maximal ideal $\m = \bigoplus_{i > 0} A_i$, and let $N = \bigoplus_{i \in \ZZ} N_i$ be a finitely generated, graded $A$-module.  The \term{Hilbert function} of $N$ is defined as $\HF_N(i) := \dim_\kk N_i$, and its generating function is the \term{Hilbert series} of $N$, denoted by 
\[\HS_N(t) := \sum_{i \geq 0} \HF_N(i) t^i.\] 
The module $N$ has a \text{minimal graded free resolution} $\mathbf{F}$ over $A$, which is an exact sequence of the form
\[\mathbf{F}: \ \ \ \cdots \xrightarrow{} F_i \xrightarrow{\partial_{i}} F_{i-1} \xrightarrow{} \cdots \xrightarrow{} F_1 \xrightarrow{\partial_1} F_0,\]
where $N \iso \coker(\partial_1)$, each $F_i$ is a finite-rank graded free $A$-module, and all maps are graded homomorphisms of degree $0$ with $\partial_i(F_i) \subseteq \m F_{i-1}$.  We can write $F_i \iso \bigoplus_{j} A(-j)^{\beta^A_{i,j}(N)}$, where $A(-j)$ denotes the rank-one free $A$-module generated in degree $j$.  The numbers $\beta^A_{i,j}(N)$ are the \term{graded Betti numbers} of $N$ over $A$ and their generating function
\[ \P_N^A(s, t) = \sum_{i \ge 0} \beta_{i, j}^A(N)s^jt^i \]
is called the \term{graded Poincar\'{e} series} of $N$ over $A$. It is convenient to display the graded Betti numbers as a table, called the \term{graded Betti table} of $N$, in which $\beta_{i,i+j}^A(N)$ is placed in column $i$ and row $j$;  see Section~\ref{open:problems} for an example.  The series $\P^A_N(t) = \P^A_N(1, t)$ is called the \term{Poincar\'{e} series} of $N$, and its coefficients
\[ \beta_i^A(N) := \sum_j \beta_{i,j}^A(N)\]
are the \term{(total) Betti numbers} of $N$.  When $N = \kk$, it is common to omit the subscript and refer to $\P^A(t) := \P^A_{\kk}(t)$ as the Poincar\'{e} series of $A$. 
We refer the reader to \cite{graded:syzygies} for further details about free resolutions and their numerical invariants.

Being a Koszul algebra forces restrictions on the Betti numbers and Hilbert series of $A$ that do not hold for all quadratic algebras.  In particular, $A$ being Koszul is equivalent to $\P^A_{\kk}(s, t)$ being a power series in $st$.  Also, $A$ is Koszul if and only if
\[\HS_A(t) \P^A(-t) = 1.\]

\section{Quadracity and Gr\"obner Bases} \label{quadracity:and:GB's}

\noindent
In this section, we study the graded M\"obius algebra of an arbitrary matroid $M$.
We give a standard graded presentation for $\GMA{M}$ along with universal and lexicographic Gr\"obner bases of its defining ideal.  

\subsection{Graded M\"{o}bius Algebras} 

\noindent
By \cite[2.15]{semi-small:decompositions}, the graded M\"obius algebra $\GMA{M}$ embeds as a subalgebra of the augmented Chow ring $\Chow(M)$ of $M$.  The linear forms of $\Chow(M)$ have a basis consisting of elements $y_i$ for each $i \in E$ and $x_F$ for each flat $F$ of $M$.  If $I \subseteq E$, we set $y_I = \prod_{i \in I} y_i$ in $\Chow(M)$.  We can then identify $\GMA{M}$ with a subalgebra of $\Chow(M)$ by mapping each basis element $y_F$ of $\GMA{M}$ to the monomial $y_I$ in $\Chow(M)$, where $I$ is any independent set with $\cl(I) = F$.

 The statement about the universal Gr\"{o}bner basis in the following proposition first appeared in \cite[3.3]{Maeno:Numata:published}; we give a shorter proof of this fact.

\begin{prop} \label{GMA:presentation}
Let $M$ be a simple matroid.  Let $U$ be the Stanley-Reisner ideal of $M$ as a simplicial complex in the polynomial ring $S = \kk[y_i \mid i \in E]$, $J$ be the ideal generated by all binomials of the form $y_I - y_{I'}$ for all independent sets $I$ and $I'$ of $M$ with $\cl(I) = \cl(I')$, and $L$ be the ideal generated by all binomials $y_{C \setminus i} - y_{C \setminus j}$ for all circuits $C$ of $M$ and all $i, j \in C$.  Then:
\begin{enumerate}[label = \textnormal{(\alph*)}]
\item We have a presentation $\GMA{M} \iso S/Q$, where $Q = (y_i^2 \mid i \in E) + U +  J$.  Moreover, the generators of $Q$ are a universal Gr\"obner basis. 

\item $Q = (y_i^2 \mid i \in E) + L$, and the generators of the latter ideal are a Gr\"obner basis for $Q$ with respect to every lex ordering for any ordering of the elements of $E$.
\end{enumerate}
\end{prop}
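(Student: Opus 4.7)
The plan is to handle (a) via a direct Hilbert function computation paired with a uniform standard-monomial count that works for any monomial order, and then deduce (b) from (a) by a basis-exchange argument identifying standard monomials under a lex order.

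For part (a), I would start by constructing the natural $\kk$-algebra map $\phi\colon S \to \GMA{M}$ coming from the embedding $\GMA{M} \hookrightarrow \Chow(M)$ recalled in the excerpt, sending $y_i \in S$ to the linear form $y_i \in \Chow(M)$. Verifying $(y_i^2) + U + J \subseteq \ker \phi$ is a direct check against the multiplication rule of $\GMA{M}$: one has $y_i \cdot y_i = 0$ since $\rk(\{i\} \vee \{i\}) = 1 \neq 2$; $y_D = 0$ for any dependent $D$ since $\rk(\cl(D)) < |D|$; and each binomial $y_I - y_{I'} \in J$ maps to $y_{\cl(I)} - y_{\cl(I')} = 0$. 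To upgrade the surjection $\phi\colon S/Q \twoheadrightarrow \GMA{M}$ to an isomorphism, I would compute $\HF_{S/Q}$: in degree $d$, $(S/Q)_d$ is spanned by the cosets of the monomials $y_I$ with $I$ an independent set of size $d$, and the generators of $J$ identify $y_I$ with $y_{I'}$ whenever $\cl(I) = \cl(I')$, so the equivalence classes biject with rank-$d$ flats of $M$, giving $\HF_{S/Q}(d) \leq \HF_{\GMA{M}}(d)$; the surjection then forces equality.

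For the universal Gr\"obner basis assertion, fix any monomial order $\prec$ on $S$ and, for each flat $F \in \L$, let $I_F$ denote the $\prec$-minimum independent set with $\cl(I_F) = F$. Any standard monomial for the proposed generators of $Q$ must be squarefree (avoiding $y_i^2$), supported on an independent set (avoiding the generators of $U$), and --- taking $J = I$ itself in the generators of $J$ --- must be $\prec$-minimal among independent sets with the same closure. Hence the standard monomials are contained in $\{y_{I_F} : F \in \L\}$, so
\[
\dim_\kk S/\init_\prec(\text{gens}) \;\leq\; |\L| \;=\; \dim_\kk S/Q,
\]
which forces equality of initial ideals and proves the universal Gr\"obner basis property.

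For part (b), the containment $L \subseteq Q$ is immediate: for a circuit $C$ and any $i \in C$, $C \setminus i$ is a basis of $\cl(C)$, so $\cl(C \setminus i) = \cl(C) = \cl(C \setminus j)$ and the defining binomials of $L$ lie in $J$. The main task is to prove the generators of $(y_i^2) + L$ form a Gr\"obner basis under an arbitrary lex order, because together with (a) this will force $(y_i^2) + L = Q$ by the equality of initial ideals. Fix a lex order $\prec$ arising from a total order on $E$. For any circuit $C$ and $i$ below $j$ in the ground-set order, the initial term of $y_{C \setminus i} - y_{C \setminus j}$ is $y_{C \setminus j}$, so the initial terms contributed by $L$ are exactly the monomials $y_{C \setminus j}$ with $C$ a circuit and $j \in C \setminus \min_\prec C$. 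A standard monomial $y_I$ must therefore be squarefree with no such subset $C \setminus j \subseteq I$; any dependent $I$ contains a circuit and at once fails this test, so $I$ is independent. The main obstacle is ruling out standard monomials $y_I$ with $I \neq I_{\cl(I)}$. Assuming such $I$ exists, I would let $m$ be the smallest index at which $I$ and $I_{\cl(I)}$ disagree (necessarily $m \in I$) and apply basis exchange between $I$ and $I_{\cl(I)}$ to produce $j \in I_{\cl(I)} \setminus I$ with $(I \setminus m) \cup j$ independent. The $\prec$-minimality of $I_{\cl(I)}$ forces $j \succ m$, and the unique circuit $C \subseteq I \cup j$ then contains both $m$ and $j$ with $C \setminus j \subseteq I$ and $\min_\prec C \preceq m \prec j$. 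Thus $y_{C \setminus j}$ divides $y_I$ with $j \succ \min_\prec C$, contradicting standardness. The resulting standard-monomial count yields the Gr\"obner basis property and the ideal equality $(y_i^2) + L = Q$.
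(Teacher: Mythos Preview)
Your argument is correct.  Part (a) follows the same Hilbert-function and standard-monomial count as the paper, though you verify $Q \subseteq \ker\phi$ by hand whereas the paper cites \cite{semi-small:decompositions}; the phrase ``taking $J = I$ itself in the generators of $J$'' is garbled, but the intended meaning---that for any independent $I \neq I_{\cl(I)}$ the generator $y_I - y_{I_{\cl(I)}}$ of $J$ has leading term $y_I$---is clear from context.

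For part (b) the two approaches diverge.  The paper argues by reduction: since the generators of $Q$ from (a) are already a Gr\"obner basis, it suffices to show that every leading monomial of a generator of $U$ or $J$ is divisible by a leading term of some generator of $L$.  The only nontrivial case is a generator $y_I - y_{I'}$ of $J$ with $\abs{I \setminus I'} \geq 2$, which the paper handles by invoking \cite[7.3.2]{homology:and:shellability:of:matroids} to produce an intermediate basis $I''$ with $\abs{I \setminus I''} = 1$.  You instead work directly with the smaller generating set: you identify the leading terms of $(y_i^2) + L$ under lex, then use basis exchange between $I$ and $I_{\cl(I)}$ to produce a fundamental circuit witnessing that any non-minimal $y_I$ is divisible by some $y_{C \setminus j}$ with $j \neq \min C$, and close with a dimension count against part (a).  Your route is more self-contained---the fundamental-circuit step replaces the external citation---while the paper's route makes the passage from the larger Gr\"obner basis to the smaller one explicit.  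Both recover the ideal equality $(y_i^2) + L = Q$ as a consequence of matching initial ideals.
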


\begin{proof}
(a)   By Lemma 2.9 and Proposition 2.15 in \cite{semi-small:decompositions}, it follows that $\GMA{M}$ is a quotient of the ring $S/Q$.  The ring $S/Q$ is spanned by squarefree monomials corresponding to the independent sets of $M$.  Moreover, monomials corresponding to independent sets with the same closure are identified in $S/Q$.  Since the Hilbert function of $\GMA{M}$ just counts to the number of flats of $M$ of a given rank by construction, it follows that the surjection $S/Q \to \GMA{M}$ must be an isomorphism.

Let $>$ be any monomial order on $S$.  For each flat $F$ of $M$, let $I_F$ denote the independent set contained in $F$ such that $y_{I_F}$ is minimal among all monomials $y_I$ with $\cl(I) = F$ in the chosen monomial order.  Then for each such $I$ with $I \neq I_{F}$, we have $y_I - y_{I_F} \in J$ so that $y_I \in \init_>(Q)$.  If $N$ denotes the monomial ideal generated by all monomials $y_i^2$ for $i \in E$, $y_D$ for $D$ a dependent set, and $y_{I'}$ for $I'$ an independent set with $y_{I'} > y_{I_{\cl(I')}}$, then $S/N$ has $S/\init_>(Q)$ as a quotient, and both rings have the same Hilbert function so that $N = \init_>(Q)$, which shows that the generators of $Q$ are a Gr\"obner basis with respect to $>$.  

(b)  By the previous part, it suffices to show that the leading monomial of each generator of $U$ and $J$ is divisible by a leading monomial of $L$.  First, we note that every monomial $y_D$ for $D$ a dependent set is divisible by the leading monomial of the binomial $y_{C \setminus i} - y_{C \setminus j}$ in $L$ for any circuit $C \subseteq D$ and any $i, j \in C$.  On the other hand, suppose $y_I - y_{I'}$ is a binomial generator of $J$ with $y_I > y_{I'}$.  If $\abs{I \setminus I'} = 1$, let $I' \setminus I = \{i\}$ and $I \setminus I' = \{j\}$, and note that there is a circuit $C \subseteq I \cup I'$ since $\cl(I) = \cl(I')$.  Note also that $i \in C$ since otherwise we would have that $C \subseteq I$, contradicting that $I$ is independent.  Similarly, we must have $j \in C$ so that $y_I - y_I' = y_U(y_{C \setminus i} - y_{C \setminus j}) \in L$, where $U = (I \cup I') \setminus C$ and $y_{C \setminus i}$ is necessarily the leading monomial of $y_{C \setminus i} - y_{C \setminus j} \in L$.    If $\abs{I \setminus I'} \geq 2$, then by applying \cite[7.3.2]{homology:and:shellability:of:matroids} to the bases of the restriction of $M$ to the flat $\cl(I) = \cl(I')$, we see that there is an independent set $I''$ with $\cl(I'') = \cl(I)$ such that $y_I > y_{I''}$ and $\abs{I \setminus I''} = 1$, and so, it follows that $y_I$, which is the leading monomial of $y_I - y_{I''} \in J$ is divisible by the leading monomial of some generator of $L$.  
\end{proof}

\begin{cor}
If $M$ is a matroid with $\rk M \leq 2$, then $\GMA{M}$ is G-quadratic.
\end{cor}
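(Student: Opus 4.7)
The plan is to deduce this essentially directly from Proposition~\ref{GMA:presentation}(b) together with the standard bound on circuit sizes. By that proposition, $\GMA{M} \iso S/Q$ where
\[ Q = (y_i^2 \mid i \in E) + L, \qquad L = \bigl(y_{C \setminus i} - y_{C \setminus j} \mid C \text{ a circuit of } M,\ i,j \in C \bigr), \]
and the displayed generators of $Q$ form a Gr\"obner basis of $Q$ with respect to every lex order on $S$ for every ordering of $E$. So it suffices to check that these generators are all quadratic. The generators $y_i^2$ are quadratic on the nose; the binomial $y_{C \setminus i} - y_{C \setminus j}$ has degree $\abs{C} - 1$.

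The key observation is that for a simple matroid every circuit $C$ satisfies $3 \leq \abs{C} \leq \rk M + 1$: the lower bound is because simplicity rules out loops ($|C|=1$) and parallel pairs ($|C|=2$), and the upper bound is the basic fact that any circuit contracted to a basis contributes at most one dependent element. Hence when $\rk M \leq 2$, every circuit has exactly three elements, so each binomial generator of $L$ has degree $\abs{C} - 1 = 2$. (If $\rk M \leq 1$, the matroid is either empty or a single point, so $L = 0$ and there is nothing to check.)

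Therefore the Gr\"obner basis furnished by Proposition~\ref{GMA:presentation}(b) is quadratic, which shows $\GMA{M}$ is G-quadratic. There is no substantial obstacle here; the proof amounts to observing that the degree-$(\abs{C}-1)$ generators in the universal/lex Gr\"obner basis of $Q$ collapse to degree $2$ precisely when $\rk M \leq 2$. In fact the argument yields the slightly stronger conclusion that $Q$ itself (without any change of coordinates) has a quadratic Gr\"obner basis with respect to every lex order.
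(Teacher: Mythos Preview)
Your argument is correct, and it is actually more self-contained than the paper's own proof. The paper splits into cases: for $\rk M = 1$ it observes $M \cong U_{1,1}$ so $\GMA{M} \cong \kk[y_1]/(y_1^2)$; for $\rk M = 2$ it notes $M \cong U_{2,n}$, observes $\GMA{M}$ is quadratic because circuits have size~3, computes the $h$-vector $(1,n,1)$, and then invokes the external results \cite[2.12, 2.4]{Groebner:flags} on Gr\"obner flags to conclude G-quadraticity. Your route sidesteps this machinery entirely: since Proposition~\ref{GMA:presentation}(b) already hands you a lex Gr\"obner basis whose nonsquare generators have degree $\abs{C}-1$, the bound $\abs{C} \leq \rk M + 1$ immediately forces that Gr\"obner basis to be quadratic. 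This is both simpler and, as you note, slightly stronger (it gives a quadratic Gr\"obner basis in the natural coordinates for every lex order, not just G-quadraticity after some linear change). The paper's approach has the mild advantage of recording the $h$-vector explicitly, but yours is the more direct deduction from what has already been proved.
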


\begin{proof}
If $\rk M = 1$, then $M \iso U_{1,1}$ since $M$ is simple, and so, we have $\GMA{M} \iso \kk[y_1]/(y_1^2)$ by the preceding proposition since there are no circuits.  If $\rk M = 2$, then $M \iso U_{2, n}$ for some $n \geq 2$.  It follows that $\GMA{M}$ is quadratic since all circuits have size 3, and the $h$-vector of $\GMA{M}$ is $(1, n, 1)$.  Thus, $\GMA{M}$ has a Gr\"obner flag by \cite[2.12]{Groebner:flags} and, hence, is G-quadratic by \cite[2.4]{Groebner:flags}.
\end{proof}

\begin{example} \label{GMA:of:broken:3-trampoline}
Let $M = M(G)$ be the cycle matroid of the graph $G$ shown below.
\begin{center}
\edgelabeledgraph{
    -1/0/v_1//-120,
    0/2/v_2//90,
    1/0/v_3//-60,
    -2/2/w_1//120,
    2/2/w_2//60
    }{
    v_1/v_2/0.5/above/b, 
    v_1/v_3/0.5/below/a, 
    v_2/v_3/0.5/above/c,
    v_1/w_1/0.5/below/d,
    w_1/v_2/0.5/above/e,
    w_2/v_2/0.5/above/f,
    w_2/v_3/0.5/below/g}
\end{center}
Abusing notation slightly, the graded M\"obius algebra of $M$ has a presentation
\[ 
\GMA{M} \iso \frac{\kk[a, b, c, d, e, f, g]}{(a^2, b^2, c^2, d^2, e^2, f^2, g^2) + L}
\]
where \vspace{1 ex}
\[
L = (ab - ac, ab - bc, bd - be, bd - de, cf - cg, cf - fg). \vspace{1 ex}
\]
Note that the larger cycles in $G$ such as the 4-cycle $\{a, d, e, c\}$ do not contribute minimal generators to the ideal $L$ since they have chords.  For example, we have \vspace{1 ex}
\[
ade - ace = -a(bd - de)  + a(bd - be) + e(ab - ac). \vspace{1 ex}
\]
\end{example}

As the above example illustrates, in studying when the defining ideals of graded M\"obius algebras are generated by quadratic relations, we are naturally led to consider various notions of what it means for a circuit of a matroid to have a chord.  While all of these notions agree in the case of graphic matroids, there does not seem to be much agreement on the correct notion of chordality for general matroids.  Our terminology follows \cite{chordality:in:matroids} and \cite{supersolvable:saturated:matroids} rather than \cite{chordal:graphs:and:binary:supersolvable:matroids} to distinguish these different notions.  

\begin{defn}
Let $M$ be a simple matroid.  We say that $M$ is: \vspace{1 ex}
\begin{itemize}
    \item \term{C-chordal} if for every circuit $C$ of $M$ of size at least four there is an element $e \in E$ and circuits $A, B$ of $M$ such that $A \cap B = \{e\}$ and $C = (A \setminus e) \sqcup (B \setminus e)$, \vspace{1 ex}
    \item \term{T-chordal} if for every circuit $C$ of $M$ of size at least four there is an element $w \in E \setminus C$ and elements $u, v \in C$ such that $\{u, v, w\}$ is a circuit, and \vspace{1 ex}
    \item \term{line-closed} if whenever $F \subseteq E$ such that for all $i, j \in F$ we have $\cl (i, j) \subseteq F$, then $F$ is a flat of $M$.
\end{itemize}
\end{defn}    

\begin{prop} \label{C-chordal:implies:quadratic:GMA} \label{line-closed:implies:T-chordal} \label{quadratic:GMA:implies:T-chordal}
Let $M$ be a simple matroid. 
\begin{enumerate}[label = \textnormal{(\alph*)}]
    \item If $M$ is C-chordal, then $\GMA{M}$ is a quadratic algebra.
    \item If $\GMA{M}$ is a quadratic algebra, then $M$ is T-chordal.
    \item If $M$ is line-closed, then $M$ is T-chordal.
\end{enumerate}
\end{prop}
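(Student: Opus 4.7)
Let $I \subseteq S$ denote the ideal generated by the quadratic elements of $Q$, namely the squares $y_i^2$ together with the binomials $y_{C' \setminus a} - y_{C' \setminus b}$ coming from 3-circuits $C'$ and pairs $a, b \in C'$. By Proposition~\ref{GMA:presentation}(b), establishing (a) reduces to showing that $y_{C \setminus i} - y_{C \setminus j} \in I$ for every circuit $C$ with $|C| \ge 4$ and every $i, j \in C$, which I plan to do by induction on $|C|$.

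\emph{For (a)}, apply C-chordality to write $C = (A \setminus e) \sqcup (B \setminus e)$ with $A, B$ circuits satisfying $A \cap B = \{e\}$; since $|A|, |B| < |C|$, the inductive hypothesis applies to both. The principal case is $i \in A \setminus e$ and $j \in B \setminus e$. By IH on $A$ applied to the elements $i, e$, we have $y_{A \setminus i} - y_{A \setminus e} \in I$, and factoring out the common monomial $y_{A \setminus \{e, i\}}$ rewrites this as $(y_e - y_i)\, y_{A \setminus \{e, i\}} \in I$. Multiplying by $y_{B \setminus \{e, j\}}$ and using the disjointness $(A \setminus \{e, i\}) \cap (B \setminus \{e, j\}) = \emptyset$ together with $e \notin C$ yields $(y_e - y_i)\, y_{C \setminus \{i, j\}} \in I$, which rearranges to $y_e\, y_{C \setminus \{i, j\}} \equiv y_{C \setminus j} \pmod I$. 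The same argument applied to $B$ gives $y_e\, y_{C \setminus \{i, j\}} \equiv y_{C \setminus i} \pmod I$, so $y_{C \setminus i} \equiv y_{C \setminus j} \pmod I$. When $i, j$ both lie on the same side of the decomposition, I reduce to this case by transitivity through any element $k$ of the opposite (nonempty) side. The main obstacle here is exactly this point: the naive attempt of multiplying $y_{A \setminus i} - y_{A \setminus e}$ by $y_{B \setminus e}$ only yields $y_e(y_{C \setminus i} - y_{C \setminus j}) \in I$, and one cannot cancel $y_e$ in $S/I$; the fix is to work with the factored form above so that the leftover $y_e$ appears on the same side of both equivalences.

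\emph{For (b)}, I would argue the contrapositive. If $C$ is a circuit of size $\ge 4$ with no T-chord, then no 3-circuit shares two or more elements with $C$: if $\{u, v, w\}$ is a 3-circuit with $u, v \in C$, then $w \in C$ would contradict minimality of $C$, while $w \notin C$ would be a T-chord. Now suppose for contradiction that $y_{C \setminus i} - y_{C \setminus j} = \sum_k h_k g_k$ with each $g_k$ a quadratic generator of $Q$ and each $h_k$ a monomial of the appropriate degree. Examine the coefficient of the squarefree monomial $y_{C \setminus i}$ on each side. The products $h_k y_a^2$ are never squarefree, while a product $h_k y_a y_b = y_{C \setminus i}$ coming from a generator $y_a y_b - y_a y_c$ forces $\{a, b\} \subseteq C \setminus i \subseteq C$, putting two elements of the 3-circuit $\{a, b, c\}$ into $C$, which was ruled out; the same holds for products of the form $h_k y_a y_c$. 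Hence the coefficient of $y_{C \setminus i}$ on the right is $0$, contradicting the coefficient $1$ on the left.

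\emph{For (c)}, this is the most direct. Given a circuit $C$ with $|C| \ge 4$ and any $i \in C$, the set $F := C \setminus i$ is independent but not a flat since $i \in \cl(F) \setminus F$. By the contrapositive of line-closedness, there exist $j, k \in F$ with $\cl(\{j, k\}) \not\subseteq F$; pick $w \in \cl(\{j, k\}) \setminus F$. Simplicity of $M$ ensures all of $\{j, k\}, \{j, w\}, \{k, w\}$ are independent, so $\{j, k, w\}$ is a 3-circuit. Finally, $w \ne i$, because otherwise $\{j, k, i\} \subsetneq C$ would be a 3-circuit contradicting minimality of $C$; thus $w \notin C$, and $\{j, k, w\}$ is the desired T-chord.
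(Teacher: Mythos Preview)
Your proof is correct and follows essentially the same approach as the paper in all three parts. For (a) your intermediate $y_e\,y_{C\setminus\{i,j\}}$ unpacks exactly the one-line identity the paper writes; for (b) the paper argues directly rather than contrapositively, but the key observation---that any quadratic generator contributing to the monomial $y_{C\setminus i}$ forces two elements of a $3$-circuit into $C$---is the same; and (c) matches the paper's argument almost verbatim.
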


\begin{proof}
(a) It suffices to show that the ideal $L$ of the preceding proposition equals the ideal $L'$ generated by the quadratic binomials in $L$.  Let $C$ be a circuit of $M$ with $\abs{C} \geq 4$, and let $i, j \in C$.  Then by assumption, there is an $e \in E$ and circuits $A, B$ of $M$ such that $A \cap B = \{e\}$ and $C = (A \setminus e) \sqcup (B \setminus e)$.  Suppose without loss of generality that $i \in A$.  Since $M$ is a simple matroid, we know that every circuit has size at least three so that $\abs{A}, \abs{B} < \abs{C}$.  Since $y_{C \setminus i} - y_{C \setminus j} = (y_{C \setminus i} - y_{C \setminus \ell}) - (y_{C \setminus \ell} - y_{C \setminus j})$ for any $\ell \in B \setminus e$, it suffices to assume $j \in B$ and show that $y_{C \setminus i} - y_{C \setminus j} \in L'$.  In that case, we note that $B \setminus e \subseteq C \setminus i$ and $A \setminus e \subseteq C \setminus j$ so that 
\[
y_{C \setminus i} - y_{C \setminus j} = y_{A \setminus \{i, e\}}(y_{B \setminus e} - y_{B \setminus j}) + y_{B \setminus \{j, e\}}(y_{A \setminus i} - y_{A \setminus e}) \in L'
\]
by a simple induction on the size of $C$.

(b) Let $Q$ be the defining ideal of the graded M\"obius algebra $\GMA{M}$ as in Proposition~\ref{GMA:presentation}.  We note that the quadratic generators of $Q$ consist of the squares $y_i^2$ for each $i \in E$ and the binomials $y_I - y_{I'}$, where $I$ and $I'$ are independent sets of $M$ of size 2 with $\cl(I) = \cl(I')$. If $\GMA{M}$ is quadratic, then these polynomials generate $Q$.  Let $C$ be a circuit of $M$ of size at least four, and let $i, j \in C$.  Since $y_{C \setminus i} - y_{C \setminus j} \in Q$, we can write 
$$y_{C \setminus i} - y_{C \setminus j} = \sum_{p = 1}^r c_pm_p(y_{I_p} - y_{J_p}),$$ where $c_p \in \kk$, $m_p$ is a monomial, and $I_p, J_p$ are distinct independent sets of size 2 with $\cl(I_p) = \cl(J_p)$.  As $y_{C \setminus i}$ must belong to the support of one of the monomials on the right side of this equality, after possibly reordering the terms in this sum and switching the roles of $I_p$ and $J_p$, we may assume that $m_1y_{I_1} = y_{C \setminus i}$. Write $I_1 = \{u, v\} \subseteq C$.  Since $J_1 \neq I_1$, there is a $w \in J_1 \setminus I_1$, and because $w \in \cl(J_1) = \cl(I_1)$ and $M$ is simple, it follows that $\{u, v, w\}$ is a circuit.  In particular, we note that $w \notin C$ or else $C$ would not be a minimal dependent set.  Thus, $M$ is T-chordal.  

(c) Suppose that $M$ is line-closed, and let $C$ be a circuit of $M$ with $\abs{C} \geq 4$ and $i \in C$.  First, note that $C \setminus i$ is an independent set which is not a flat since $i \in \cl(C \setminus i)$.  Hence, there exist $u, v \in C \setminus i$ such that $\cl \{u, v\}  \nsubseteq C \setminus i$.  Choose $w \in \cl\{u, v\} \setminus (C \setminus i)$, and set $T = \{u, v, w\}$.  Note that every 2-element subset of $T$ is independent since $M$ is a simple matroid, and so, $T$ is a circuit.  Moreover, $w \notin C$ since otherwise we would have $w = i$ so that $T \subseteq C$, contradicting that $C$ is a circuit. Thus, $M$ is T-chordal.
\end{proof}

\subsection{Connections with Orlik-Solomon Algebras} 

\noindent
The \term{Orlik-Solomon algebra} of a simple matroid $M$ with a total order $\prec$ on its ground set $E$ is the quotient $\OS{M} = S/J$ of the exterior algebra $S = \exterior_\kk\langle z_i \mid i \in E \rangle$ by the ideal 
\[
J = (\dd(z_C) \mid C \; \text{is a circuit}),
\]
where $z_I := z_{i_1}z_{i_2} \cdots z_{i_s}$ for each $I = \{i_1 \prec i_2 \prec \cdots \prec i_s\} \subseteq E$ and
\[
\dd(z_I) = \sum_{p = 1}^s (-1)^{p-1}z_{I \setminus i_p}.  
\]

\begin{example} \label{OS:of:broken:3-trampoline}
The Orlik-Solomon algebra of the matroid $M$ from Example \ref{GMA:of:broken:3-trampoline} is \vspace{1 ex}
\[ 
\OS{M} = \frac{\exterior_\kk \langle a, b, c, d, e, f, g \rangle}{(bc - ac + ab, de - be + bd, fg - cg + cf)}. \vspace{1 ex}
\]
\end{example}

Our presentation of the graded M\"obius algebra bears a strong resemblance to that of the Orlik-Solomon algebra; the non-monomial relations of the graded M\"obius algebra come from splitting up the relations of the Orlik-Solomon algebra into binomials.  This does not appear to be a purely coincidental equational similarity.  Just as the graded M\"obius algebra of a linear matroid is isomorphic to the (even-dimensional) cohomology ring of the variety obtained by taking the closure of the linear subspace it determines in a product of projective lines as studied by Ardila and Boocher \cite{Ardila-Boocher:Schubert:varieties}, when $\kk = \CC$ and $M$ is the matroid associated with a central complex hyperplane arrangement $\mathcal{H}$ in $\CC^d$, it is well-known that $\OS{M}$ is isomorphic to the de Rham cohomology ring of the complement $\CC^d \setminus \bigcup_{H \in \mathcal{H}} H$ \cite{OS}.  
Furthermore, Shelton and Yuzvinsky observed that hyperplane arrangements with Koszul Orlik-Solomon algebras satisfy the lower central series formula \cite{SY97}, and work of Papadima and Yuzvinsky \cite{PY99} showed that an arrangement has a Koszul Orlik-Solomon algebra exactly when its complement is a rational $K(\pi,1)$-space.  
As a result, much of our work investigating when graded M\"obius algebras are Koszul is motivated by drawing parallels with the much better studied case of Orlik-Solomon algebras.  For example, part (a) of Proposition \ref{C-chordal:implies:quadratic:GMA} is already similar in spirit to \cite[3.3]{Irena:hyperplane:arrangements} for Orlik-Solomon algebras.

We recall the following theorem characterizing when the Orlik-Solomon algebra of a matroid has a quadratic Gr\"{o}bner basis, which the reader should compare with our Theorem~\ref{GMA:quadratic:GB}.  

\begin{thm}[{\cite[2.8]{broken:circuit:complexes}}, {\cite[4.2]{Irena:hyperplane:arrangements}}]
Let $M$ be a simple matroid and $\OS{M} = S/J$ be its Orlik-Solomon algebra.  Then the following are equivalent:
\begin{enumerate}[label = \textnormal{(\alph*)}]

\item There is a monomial order $>$ such that $\init_>(J)$ is quadratic.

\item There is a lex order $>_\mathrm{lex}$ such that $\init_{>_\mathrm{lex}}(J)$ is quadratic.

\item $\L(M)$ is supersolvable.
\end{enumerate}
\end{thm}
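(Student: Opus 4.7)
The plan is to prove the cycle $(b) \Rightarrow (a) \Rightarrow (c) \Rightarrow (b)$. The first implication is immediate since a lex order is a monomial order, so the interest lies in the other two directions.

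For $(c) \Rightarrow (b)$, I would begin with a maximal chain of modular flats $\emptyset = F_0 \coverby F_1 \coverby \cdots \coverby F_d = E$ in $\L(M)$ and order the ground set $E$ so that the atoms beneath $F_i$ but not beneath $F_{i-1}$ form a contiguous initial block for each $i$. Under the induced lex order on $S = \exterior_\kk \langle z_i \mid i \in E \rangle$, the leading monomial of $\dd z_C$ is $z_{C \setminus \min C}$. The critical combinatorial fact I would establish is that every circuit $C$ of size at least four harbors a 2-element broken circuit inside $C \setminus \min C$: if $\min C$ lies in $F_i \setminus F_{i-1}$, then modularity of $F_{i-1}$ inside $F_i$ produces a 3-element circuit $\{\min C, u, v\}$ with $\{u, v\} \subseteq C \setminus \min C$. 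A Buchberger $S$-polynomial computation in the exterior algebra, exploiting the relations $z_i^2 = 0$, then confirms that the $\dd z_C$ form a Gr\"obner basis whose initial ideal is quadratic.

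For $(a) \Rightarrow (c)$, I would induct on $\rk M$. The first step is to refine the given monomial order to a lex order whose initial ideal remains quadratic. Because the generators of $J$ are alternating sums over circuits in fixed degrees, any quadratic leading term of an element of $J$ is supported on a 2-element broken circuit for some total ordering of $E$, and a suitable lex refinement preserves quadracity of the initial ideal. With a lex order in hand, the initial ideal is generated by broken circuits, so quadracity means every circuit of size at least four contains a triangle through its minimum element---precisely the combinatorial criterion for supersolvability. Taking $F = \cl(E \setminus \max E)$, I would then show that $F$ is modular in $\L(M)$ and that $M \vert F$ inherits the same quadratic property, so induction on rank completes a modular maximal chain.

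The hardest step is the refinement from an arbitrary monomial order to a lex order in $(a) \Rightarrow (c)$. The exterior algebra setting constrains initial ideals of $J$ to be generated by squarefree monomials in the same degrees as the $\dd z_C$, but the delicate claim is that whenever $\init_>(J)$ contains a quadratic generator, there is some total ordering of $E$ under which the corresponding 2-element set is actually a broken circuit. Making this precise---by inspecting how the supports of $S$-polynomial reductions propagate through the anti-commutative relations in $S$---is where I expect the bulk of the technical work to lie.
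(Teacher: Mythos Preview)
The paper does not supply its own proof of this theorem; it is cited from \cite{broken:circuit:complexes} and \cite{Irena:hyperplane:arrangements}, and the surrounding discussion only sketches the original arguments: Bj\"orner--Ziegler prove $(b)\Leftrightarrow(c)$ by showing that supersolvability is equivalent to all minimal broken circuits having size two, while Peeva proves $(a)\Leftrightarrow(b)$ by showing that \emph{every} initial ideal of the Orlik--Solomon ideal equals the initial ideal with respect to some lex order. Your overall architecture is compatible with this, and you correctly identify Peeva's reduction $(a)\Rightarrow(b)$ as the technically hardest step. However, two concrete pieces of your sketch are wrong as written.

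In $(c)\Rightarrow(b)$, your claim that modularity of $F_{i-1}$ inside $F_i$ (where $\min C\in F_i\setminus F_{i-1}$) yields a triangle $\{\min C,u,v\}$ with $u,v\in C\setminus\min C$ fails: nothing forces $C\subseteq F_i$, so the pair $F_{i-1}\subset F_i$ is the wrong place to look, and in general the triangle produced does \emph{not} contain $\min C$. The correct move is to take the largest $j$ with $C\subseteq F_j$; then $F_{j-1}$ is a modular hyperplane of $M|F_j$, so any line through two elements $a,b\in C\setminus F_{j-1}$ meets $F_{j-1}$ in a point $c$, giving a $3$-circuit $\{a,b,c\}$ with $c\prec a,b$. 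Under your ordering the elements of $C\setminus F_{j-1}$ are the \emph{largest} in $C$, so $\{a,b\}\subseteq C\setminus\min C$ is the desired size-two broken circuit.

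In $(a)\Rightarrow(c)$, your proposed modular copoint $F=\cl(E\setminus\max E)$ is almost always all of $E$: unless the maximal element is a coloop, removing a single element does not drop the closure. The flat you actually want (after reducing to a lex order) is the closure of the elements lying in the lower blocks, i.e.\ the candidate $F_{d-1}$ in the chain, and the argument that it is modular is precisely the Bj\"orner--Ziegler direction $(b)\Rightarrow(c)$ run through the broken-circuit characterization. Your acknowledgement that the refinement from an arbitrary monomial order to a lex order is the crux is accurate; this is exactly Peeva's theorem, and it is not a routine Buchberger computation but a structural statement about initial ideals of Orlik--Solomon ideals.
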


It is clear that the leading terms of the forms $\dd(z_C)$ generating $J$ (with respect to the lexicographic order induced by the total order on $E$) are precisely the monomials $z_{C \setminus \min C}$ corresponding to broken circuits of $M$.  Bj\"{o}rner showed that these monomials generate the initial ideal of $J$ \cite[7.10.1]{homology:and:shellability:of:matroids} and, thus, the monomials corresponding to nbc-sets constitute a monomial basis for $\OS{M}$.  The equivalence (b) $\Iff$ (c) of the preceding theorem was proved by Bj\"orner and Ziegler, who showed combinatorially that $\L(M)$ being supersolvable is equivalent to the minimal broken circuits of $M$ all having size 2, and the equivalence (a) $\Iff$ (b) was proved by Peeva, who showed that every initial ideal of the Orlik-Solomon ideal is the initial ideal with respect to some lex order.

As an immediate consequence of the above theorem, when the lattice of flats $\L(M)$ is supersolvable, $\OS{M}$ has a quadratic Gr\"obner basis and, hence, is Koszul.  In general, it is an open question whether every Koszul Orlik-Solomon algebra comes from a supersolvable matroid.  However, in the case when $M = M(G)$ is the cycle matroid of a graph $G$, we have that $\OS{M}$ is Koszul if and only if $G$ is chordal if and only if $\mathcal{L}(M)$ is supersolvable \cite{SS02, Stanley72}. 

\subsection{Quadratic Gr\"obner Bases}

Turning our attention now to the issue of when the graded M\"obius algebra of a matroid has a quadratic Gr\"obner basis, we make the main combinatorial definition of the paper, motivated by the results of the next section.

\begin{defn} \label{strongly:T-chordal}
Let $M$ be a simple matroid, and let $\succ$ be a total order on its ground set $E$.  We say that: 
\begin{itemize}
    \item A set $S \subseteq E$ has a \term{MAT-triple} if there exist $u, v \in S$ and a $w \in E$ such that $\{u, v, w\}$ is a circuit and $w \succ \min(u, v)$. \vspace{1 ex}
    \item A circuit $C$ of $M$ is a \term{MAT-circuit} if for every $i \in C \setminus \min C$, the set $C \setminus i$ has a MAT-triple. \vspace{1 ex}
    \end{itemize} 
     The matroid $M$ is \term{strongly T-chordal} if there is a total order $\succ$ on $E$ such that every circuit $C$ of $M$ of size at least four is a MAT-circuit, in which case we call $\succ$ a \term{strong elimination order} for $M$.  When $M = M(G)$ is the cycle matroid of a graph $G$ and $M$ is strongly $T$-chordal, we call the associated ordering on the edges of $G$ a \term{strong edge elimination order}.
\end{defn}

Here, MAT is short for Multiple Addition Theorem relating to the construction of free hyperplane arrangements.  See Section~\ref{MAT-labelings} for more information.

\begin{thm} \label{GMA:quadratic:GB}
Let $M$ be a simple matroid and $\GMA{M} \iso S/Q$ be its graded M\"obius algebra  as in Proposition~\ref{GMA:presentation}.  Then the following are equivalent:
\begin{enumerate}[label = \textnormal{(\alph*)}]

\item There is a monomial order $>$ such that $\init_>(Q)$ is quadratic.

\item $M$ is strongly T-chordal.

\item There is a lex order $>_\mathrm{lex}$ such that $\init_{>_\mathrm{lex}}(Q)$ is quadratic.

\item $\GMA{M}$ is quadratic, and there exists a total order $\succ$ on $E$ such that for every circuit $C$ of $M$ of size exactly four is a MAT-circuit.
\end{enumerate}
\end{thm}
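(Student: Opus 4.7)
The plan is to prove the equivalences via the cycle $(c) \Rightarrow (a) \Rightarrow (b) \Rightarrow (c)$, supplemented by $(b) \Leftrightarrow (d)$. The implication $(c) \Rightarrow (a)$ is immediate, while $(b) \Rightarrow (d)$ is trivial since condition $(d)$ is the restriction of $(b)$ to $4$-circuits. The main technical work concentrates on $(a) \Rightarrow (b)$, $(b) \Rightarrow (c)$, and $(d) \Rightarrow (b)$.

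For $(a) \Rightarrow (b)$, a quadratic $\init_>(Q)$ forces $Q$ to be generated in degree $2$. I would define $\succ$ on $E$ by $a \succ b$ iff $y_a < y_b$ in $>$. For any circuit $C$ of size at least $4$ and any $i \in C \setminus \min_\succ C$, let $m = \min_\succ C$, so that $y_m$ is the $>$-largest variable among those in $C$. Factoring, $y_{C \setminus i} = y_m \cdot y_{C \setminus \{i, m\}}$ and $y_{C \setminus m} = y_i \cdot y_{C \setminus \{i, m\}}$, and multiplicativity of $>$ gives $y_{C \setminus i} > y_{C \setminus m}$; hence $y_{C \setminus i}$ is the leading term of $y_{C \setminus i} - y_{C \setminus m} \in Q$, so $y_{C \setminus i} \in \init_>(Q)$. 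By quadraticity of the initial ideal, $y_{C \setminus i}$ is divisible by some $y_u y_v \in \init_>(Q)$ with $\{u, v\} \subseteq C \setminus i$. After a standard reduction handling the case $\cl\{u,v\} = \cl\{c,d\}$ with $\{u,v\} \cap \{c,d\} = \emptyset$ (in which a triangle $\{u,v,c\}$ still lies in the common rank-$2$ flat), the monomial $y_u y_v$ is the leading term of a triangle-binomial $y_u y_v - y_u y_w$ for a $3$-circuit $\{u,v,w\}$. The inequality $y_u y_v > y_u y_w$ gives $y_v > y_w$, i.e.\ $w \succ v$; combined with $v \succeq \min_\succ(u, v)$, this yields $w \succ \min_\succ(u, v)$, so $\{u, v, w\}$ is a MAT-triple in $C \setminus i$, and $M$ is strongly T-chordal.

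For $(b) \Rightarrow (c)$, given a strong elimination order $\succ$, I take the lex order on $S$ with $y_a > y_b$ iff $a \prec b$. By Proposition~\ref{GMA:presentation}(b), the set $\{y_e^2 : e \in E\} \cup \{y_{C \setminus i} - y_{C \setminus j}\}$ is a Gr\"obner basis. Each $3$-circuit $\{a, b, c\}$ with $a \prec b \prec c$ contributes quadratic leading terms $y_a y_b$ and $y_a y_c$, both containing $\min_\succ$ of the triangle. For a circuit $C$ of size at least $4$, the lex leading monomials of the binomials associated with $C$ are $y_{C \setminus k}$ for $k \in C \setminus \min_\succ C$; the MAT-triple in each $C \setminus k$ supplies a pair $\{u, v\}$ extending to a triangle with $\min_\succ$ lying in $\{u, v\}$, and the corresponding quadratic leading monomial divides $y_{C \setminus k}$. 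Hence $\init_>(Q)$ is generated in degree $2$.

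For $(d) \Rightarrow (b)$, I induct on $|C|$, with base case $|C| = 4$ from the hypothesis. For a circuit $C$ of size at least $5$ and any $i \in C \setminus \min_\succ C$, quadraticity of $Q$ lets me express $y_{C \setminus i} - y_{C \setminus \min_\succ C}$ as a sum of monomial multiples of triangle-binomials $y_u y_v - y_u y_w$. Each reduction step exchanges an element of the current monomial with the third vertex of a triangle, producing a chain of independent sets of closure $\cl(C)$. By tracking this chain, we either locate a MAT-triple for $C \setminus i$ directly or identify a shorter circuit arising along the reduction to which the inductive hypothesis applies. The hardest part will be this last step: the combinatorial bookkeeping through the reduction chain must yield a triangle whose third element meets the order condition $w \succ \min_\succ(u, v)$, with the $4$-circuit MAT property serving as the seed that propagates to all larger circuits.
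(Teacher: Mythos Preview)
Your arguments for $(c)\Rightarrow(a)$, $(a)\Rightarrow(b)$, and $(b)\Rightarrow(c)$ are essentially the paper's, and they are correct. One small comment on $(b)\Rightarrow(d)$: it is not literally a ``restriction to $4$-circuits,'' since $(d)$ also demands that $\GMA{M}$ be quadratic; but this follows from your own $(b)\Rightarrow(c)$, so the logic closes.

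The real divergence is in how you handle $(d)$. The paper does \emph{not} prove $(d)\Rightarrow(b)$ directly; it proves $(d)\Rightarrow(c)$ in one stroke by invoking a standard Gr\"obner-basis fact (\cite[34.13]{graded:syzygies}): if $Q$ is generated by quadrics and $\init_{>}(Q)$ has no minimal generator of degree three, then the quadrics already form a Gr\"obner basis. The point is that Buchberger's criterion for a set of quadrics only produces nontrivial $S$-polynomials in degree three (pairs with coprime leading terms reduce to zero automatically), so checking degree three suffices. Under $(d)$ and the lex order coming from $\succ$, Proposition~\ref{GMA:presentation}(b) gives a Gr\"obner basis whose only degree-three leading terms are the $y_{C\setminus i}$ for $4$-circuits $C$, and the MAT-circuit hypothesis on $4$-circuits makes each of these divisible by a quadratic leading term. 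Hence no degree-three minimal generator, hence quadratic Gr\"obner basis.

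Your proposed $(d)\Rightarrow(b)$ by induction on $\lvert C\rvert$ is where there is a genuine gap. The sketch---express $y_{C\setminus i}-y_{C\setminus\min C}$ as a $Q$-combination of triangle binomials and ``track the reduction chain'' to extract a MAT-triple or a shorter circuit---does not supply a mechanism for controlling the \emph{order} condition $w\succ\min(u,v)$. Quadraticity of $Q$ only gives T-chordality (Proposition~\ref{quadratic:GMA:implies:T-chordal}(b)): some triangle meets $C$ in two points, with no information about where its third element sits in $\succ$. Nor does a triangle chord of a large circuit produce a canonical shorter circuit to induct on; circuit elimination can return a circuit of the same size. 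I do not see how to push this induction through without, in effect, reproving the degree-three Buchberger criterion above. The cleanest fix is to replace your $(d)\Rightarrow(b)$ with the paper's $(d)\Rightarrow(c)$ via that lemma; then $(d)\Rightarrow(c)\Rightarrow(a)\Rightarrow(b)$ closes the cycle.
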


\begin{proof}
It is obvious that (c) implies (a).  Once we know that (b) implies (c), it is also immediate that (b) implies (d).

(a) $\Implies$ (b): Define a total order $\prec$ on $E$ by $u \prec v$ if and only if $y_u > y_v$.    We claim that $\prec$ is a strong elimination order for $M$.  If $C$ is a circuit of $M$ of size at least 4, we can write $C = \{i_1 \prec i_2 \prec \cdots \prec i_r \}$, and we note that $i_j \prec i_k$ if and only if
\[ 
y_{C \setminus i_k} = y_{C \setminus \{i_j, i_k\}}y_{i_j} > y_{C \setminus \{i_j, i_k\}}y_{i_k} = y_{C \setminus i_j}. 
\]
We must show that $C \setminus i_k$ has a MAT-triple for each $k > 1$.  

For each $k > 1$, we know that the binomial $y_{C \setminus i_k} - y_{C \setminus \min C}$ is part of a universal Gr\"obner basis for $Q$ by Proposition~\ref{GMA:presentation}, and so, it follows that $y_{C \setminus i_k} \in \init_>(Q)$.  Since $\init_>(Q)$ is quadratic, there must be a quadratic binomial generator $y_I - y_{I'}$ of $Q$ whose leading monomial $y_I$ divides $y_{C \setminus i_k}$.  Write $I = \{u, v\}$ and $I' = \{w, t\}$, and note that either $u \preceq w, t$ or $v \preceq w, t$, since otherwise we would have
$w, t \prec u, v$ so that
\[ y_{I'} = y_wy_t > y_uy_t > y_uy_v = y_I, \]
contradicting that $y_I$ is the leading monomial.  And so, after possibly relabeling, we may assume without loss of generality that $u \prec w$.  Either way, there is an element $w \in I' \setminus I$ such that $w \succ \min(u, v)$.  Since $\cl(I) = \cl(I')$ and $M$ is simple, it follows that $\{u, v, w\}$ is a circuit.  Moreover, $u, v \in C \setminus i_k$, since $y_I$ divides $y_{C \setminus i_k}$.  Thus, $T$ is a MAT-triple for $C \setminus i_k$, and $M$ is strongly T-chordal.

(b) $\Implies$ (c): Let $\succ$ be a strong elimination order for $M$, and let $>_\mathrm{lex}$ be the lex order determined by the variable order defined by $y_u > y_v$ if and only if $u \prec v$. In this case, part (b) of Proposition~\ref{GMA:presentation} implies that the polynomials $y_i^2$ for $i \in E$ and $y_{C \setminus i} - y_{C \setminus \min C}$ for each circuit $C$ of $M$ and $i \neq \min C$ form a Gr\"obner basis for the ideal $Q$. Let $C$ be a circuit of $M$ of size at least four and $i \in C \setminus \min C$.  As $M$ is strongly T-chordal, we know that there are $u, v \in C \setminus i$ and $w \in E \setminus C$ such that  $\{u, v, w\}$ is a circuit with $w \succ \min(u, v)$.  Without loss of generality, we may assume that $u \succ v$. Then $y_uy_v - y_wy_v \in Q$ with leading term $y_uy_v$ that divides $y_{C \setminus i}$.  As this holds for every circuit $C$ of size at least four and every $i \in C \setminus \min C$, it follows that $\init_{>_\mathrm{lex}}(Q)$ is quadratic.

(d) $\Implies$ (c): Let $\succ$ be the total order on $E$ as in part (d) of the proposition, and let $>_\mathrm{lex}$ denote the lex order determined by the variable order $y_u > y_v$ if and only if $u \prec v$.  Again, part (b) of Proposition~\ref{GMA:presentation} implies that the polynomials $y_i^2$ for $i \in E$ and $y_{C \setminus i} - y_{C \setminus \min C}$ for each circuit $C$ of $M$ and $i \neq \min C$ form a Gr\"obner basis for the ideal $Q$, and a similar argument to the proof of the previous implication shows that $y_{C \setminus i}$ is not a minimal generator of $\init_{>_\mathrm{lex}}(Q)$ for every circuit $C$ of size four and $i \in C \setminus \min C$.  Hence, $\init_{>_\mathrm{lex}}(Q)$ has no minimal generators of degree three.  Since by assumption $Q$ is generated by quadrics, it then follows from \cite[34.13]{graded:syzygies} that the quadratic generators of $Q$ are a Gr\"obner basis.
\end{proof}

Figure \ref{quadracity:implications} below summarizes the connections between the   various combinatorial properties of $M$ and quadracity properties of $\GMA{M}$ and $\OS{M}$ discussed. We conclude this section with a few examples related to certain implications in the figure.

\begin{figure}[htb]
    \begin{center}
    \begin{tikzcd}[column sep = 5em, row sep = 3.5 em]
    \text{supersolvable}
    \dar[Leftrightarrow, swap]{\text{\cite[2.8]{broken:circuit:complexes}\,}} 
    &
    \substack{ \text{\normalsize binary and} \\ \text{\normalsize supersolvable}} 
    \dar[Rightarrow, swap]{\text{\cite[2.2]{chordal:graphs:and:binary:supersolvable:matroids}\,}} 
    \lar[Rightarrow]
    &
    \text{strongly T-chordal} 
    \dar[Leftrightarrow]{\text{\; Thm \ref{GMA:quadratic:GB}}}
    \\
    \OS{M} \,\text{quadratic GB}
    \dar[Rightarrow]
    & 
    \text{C-chordal}  
    \arrow[Rightarrow]{ld}{\text{\cite[3.3]{Irena:hyperplane:arrangements}\,}}
    \arrow[Rightarrow, swap]{rd}{\text{Prop \ref{C-chordal:implies:quadratic:GMA}(a)\; }}
    &
    \GMA{M} \, \text{quadratic GB} \dar[Rightarrow]
    \\
    \OS{M} \,\text{quadratic} 
    \dar[Rightarrow,swap]{\text{\cite[6.10]{Yuzvinsky:Orlik-Solomon:algebras:survey}}}
    &
    &
    \GMA{M} \, \text{quadratic} \dar[Rightarrow]{\text{\; Prop \ref{quadratic:GMA:implies:T-chordal}(b)}} 
    \\
    \text{3-independent} \rar[Rightarrow]{\text{\cite[6.8]{Yuzvinsky:Orlik-Solomon:algebras:survey}}} 
    &
    \text{line-closed} \rar[Rightarrow]{\text{Prop \ref{line-closed:implies:T-chordal}(c)}}
    &
    \text{T-chordal}
    \end{tikzcd}
    \end{center}
    \caption{Matroid Chordality and Quadracity Properties of Graded M\"obius and Orlik-Solomon Algebras}\label{fig1}
    \label{quadracity:implications}
\end{figure}
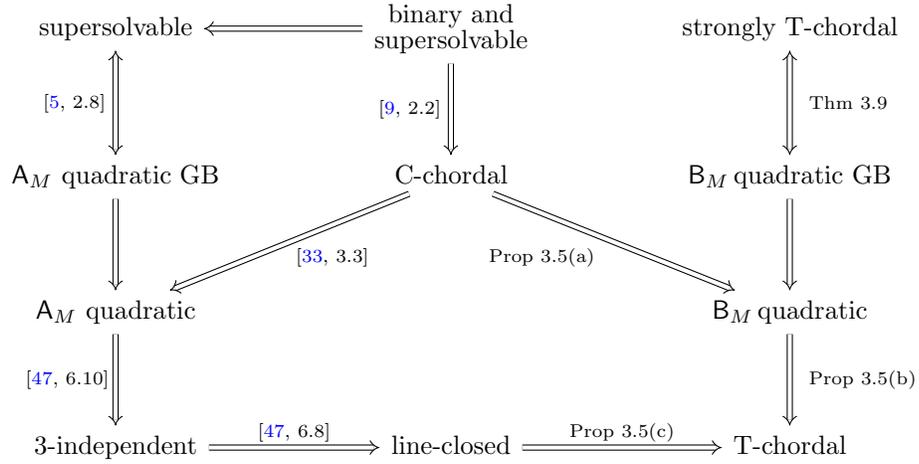

\begin{example}
We give an example that $M$ being T-chordal does not imply that $\GMA{M}$ is quadratic, which shows the converse to Proposition~\ref{quadratic:GMA:implies:T-chordal}(b) is false in general.  Probert shows that the matroid $L_{2,3}$ from \cite[Figure 6.4]{chordality:in:matroids}, which is obtained from the uniform matroid $U_{3,5}$ on the ground set $E = \{1,2,3,4,5\}$ by removing the basis $234$, is T-chordal but not C-chordal.  The defining ideal of $\GMA{L_{2,3}}$ is $Q = (y_1^2,y_2^2,y_3^2,y_4^2,y_5^2) + L$, where
    \[ L = \left(y_{3}y_{5}-y_{4}y_{5}, \
    y_{3}y_{4}-y_{4}y_{5},
    \ y_{1}y_{4}y_{5}-y_{2}y_{4}y_{5}, \ y_{1}y_{2}y_{5}-y_{2}y_{4}y_{5}, \ y_{1}y_{2}y_{4}-y_{2}y_{4}y_{5}, \ y_{1}y_{2}y_{3}-y_{2}y_{4}y_{5}\right). \]
    In particular, $\GMA{L_{2,3}}$ is not quadratic.
\end{example}

\begin{example}
We give an example that
$M$ being T-chordal does not imply that $M$ is line-closed, and thus, the converse to Proposition~\ref{line-closed:implies:T-chordal}(c) is false in general.
    Let $\mathcal{W}^3$ denote the rank-three whirl matroid.  The circuits of $\mathcal{W}^3$ of size $3$ are $123, 345, 156$, while those of size $4$ are 1245, 1246, 1346, 2346, 2356, 2456.
    It is straightforward to check that $\mathcal{W}^3$ is T-chordal.  However, Falk shows that $\mathcal{W}^3$ is not line-closed \cite[2.6]{Falk02}.\footnote{To avoid any confusion for the reader, we note that Falk mistakenly refers to this matroid as the rank-three wheel matroid $\mathcal{W}_3$;  however, the rank-three wheel is the same as the cycle matroid of the complete graph $K_4$, which is C-chordal and, hence, line-closed.  The whirl $\mathcal{W}^3$ is the unique relaxation of $\mathcal{W}_3$.} 
\end{example}

\begin{example}
We give an example that quadracity of the graded M\"obius algebra of a matroid $M$ does not imply that $M$ is C-chordal, and hence, the converse to Proposition~\ref{C-chordal:implies:quadratic:GMA}(a) is false in general.
Consider the Betsy Ross matroid $B$ pictured in Figure~\ref{betsyross}.  One can check that $\GMA{B}$ is quadratic, however $B$ is not C-chordal.  To see this, observe that there are 25 circuits of size 3: 5 corresponding to 3 points on a line through the center and 20 corresponding to 3 of the 4 points on a line connecting points of the star.  The set $C = 0123$ is a circuit of size $4$.  If $B$ were C-chordal, we could partition $C$ into two pairs of points such that each pair lie on a line in the diagram and the intersection of those two lines corresponded to a point in the matroid; clearly this is not possible.  

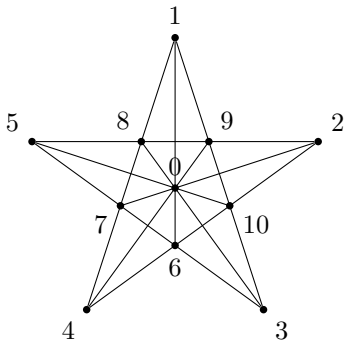
\begin{figure}[htb]
    \centering
    \begin{tikzpicture}
    \node[label=+90:{$0$},circle,fill=black,scale=.3] (Z) at (0,0) {};
    \node[label=+90:{$1$},circle,fill=black,scale=.3] (A) at ({2*cos(90)},{2*sin(90)}) {};
    \node[label=+162:{$5$},circle,fill=black,scale=.3] (E) at ({2*cos(90+72)},{2*sin(90+72)}) {};
    \node[label=+234:{$4$},circle,fill=black,scale=.3] (D) at ({2*cos(90+144)},{2*sin(90+144)}) {};
    \node[label=+306:{$3$},circle,fill=black,scale=.3] (C) at ({2*cos(90+216)},{2*sin(90+216)}) {};
    \node[label=+18:{$2$},circle,fill=black,scale=.3] (B) at ({2*cos(90+288)},{2*sin(90+288)}) {};
\draw [name path=A--C] (A) -- (C);
\draw [name path=B--D] (B) -- (D);
\draw [name path=C--E] (C) -- (E);
\draw [name path=D--A] (D) -- (A);
\draw [name path=E--B] (E) -- (B);
\path [name intersections={of=A--C and B--D,by=F}];
\node [label=+342:{$10$},circle,fill=black,scale=.3] at (F) {};
\path [name intersections={of=B--D and C--E,by=G}];
\node [label=+270:{$6$},circle,fill=black,scale=.3] at (G) {};
\path [name intersections={of=C--E and D--A,by=H}];
\node [label=+198:{$7$},circle,fill=black,scale=.3] at (H) {};
\path [name intersections={of=D--A and E--B,by=I}];
\node [label=+126:{$8$},circle,fill=black,scale=.3] at (I) {};
\path [name intersections={of=E--B and A--C,by=J}];
\node [label=+54:{$9$},circle,fill=black,scale=.3] at (J) {};
\draw [name path=A--C] (A) -- (G);
\draw [name path=B--D] (B) -- (H);
\draw [name path=C--E] (C) -- (I);
\draw [name path=D--A] (D) -- (J);
\draw [name path=E--B] (E) -- (F);
    \end{tikzpicture}
    \caption{The Betsy Ross matroid}
    \label{betsyross}
\end{figure}
\end{example}

In the remaining sections of the paper, we specialize to studying the graded M\"obius algebras of graphic matroids. Below we observe that there exist non-graphic matroids which are strongly T-chordal and, hence, have Koszul graded M\"obius algebras.

\begin{example}
The  Fano matroid $F_7$ is the projective geometry of points in the projective plane over $\ZZ/2\ZZ$ shown below with points being represented by strings of their homogeneous coordinates.
\begin{center}
    \begin{tikzpicture}[scale = 0.6]
        \node (7) at (0,0) {111};
        \node (1) at (90: 4) {100};
        \node (2) at (210:4) {010};
        \node (3) at (-30:4) {001};
        \node (4) at (0, -2) {011};
        \node (5) at (30:2) {101};
        \node (6) at (150: 2) {110};
        \draw[thick] (4) -- (7) -- (1) -- (6) -- (2) -- (4) -- (3) -- (5) -- (1);
        \draw[thick] (6) -- (7) -- (3);
        \draw[thick] (2) -- (7) -- (5) to[out = -75, in = 15] (4) to[out = 165, in = -105] (6) to[out = 45, in = 135] (5);
    \end{tikzpicture}
\end{center}
Its circuits of size 3 are precisely the sets of points lying on each of the 7 lines, while its circuits of size at least four consist of 4 points in general linear position or, equivalently, the points not on a given line.  It is not too hard to check that the ordering of the points $$100 \prec 010 \prec 001 \prec 011 \prec 101 \prec 110 \prec 111$$ is a strong elimination order so that $F_7$ is strongly T-chordal. 
\end{example}

\section{Strongly Chordal Graphs and MAT-Labelings}
\label{MAT-labelings}

\noindent
Let $G$ denote a finite simple graph with vertex set $V(G)$ and edge set $E(G)$. Given vertices $v, w \in V(G)$, we write $vw$ to denote that the set $\{v, w\}$ is an edge of $G$.  The set of \term{neighbors} of $v$ in $G$ is denoted by $N(v)$, and the \term{closed neighborhood} of $v$ is the set $N[v] = N(v) \cup \{v\}$.  To simplify notation, we will also write $\GMA{G}$ to denote the graded M\"obius algebra of the cycle matroid $M(G)$.  We refer the reader to \cite{West:graph:theory} for any unexplained graph-theoretic terminology.

Recall that a graph $G$ is called \term{chordal} if every cycle of length at least four in $G$ has a chord. Chordal graphs can be characterized as the graphs for which every induced subgraph has a \term{simplicial vertex}, a vertex whose closed neighborhood within the subgraph forms a clique. Equivalently, a graph $G$ is chordal if and only if there is an ordering $v_1, \dots, v_n$ of its vertices such that $v_i$ is a simplicial vertex of the induced subgraph $G[v_i, v_{i+1}, \dots, v_n]$ for all $i$. Such a vertex ordering is called a \term{perfect elimination order}.  

\subsection{Strongly Chordal Graphs} A graph $G$ is called \term{strongly chordal} if there is an ordering $v_1, \dots, v_n$ of its vertices such that for all $i < j$ and $k < \ell$, if $v_k, v_\ell \in N[v_i]$ and $v_j \in N[v_k]$, then $v_j \in N[v_{\ell}]$.  Such a vertex ordering is called a \term{strong (perfect) elimination order}.  If it becomes necessary to disambiguate between the strong elimination order on the vertices of a strongly chordal graph $G$ and a strong elimination order as defined in Definition \ref{strongly:T-chordal} for the ground set of the cycle matroid $M(G)$, which is the set of edges of $G$ by definition, we will refer to strong \textit{vertex} elimination orders and strong \textit{edge} elimination orders respectively. 
 
A vertex $v$ of $G$ is called \term{simple} if the collection of distinct sets in $\{N[w] \mid w \in N[v]\}$ is totally ordered by inclusion.  In particular, every simple vertex is simplicial.  If $G$ is strongly chordal with strong elimination order $v_1,\ldots,v_n$, then it is easily seen that the vertex $v_i$ is a simple vertex of the induced subgraph $G[v_i, v_{i+1}, \dots, v_n]$ for all $i$.  Such a vertex ordering is called a \term{simple elimination order}. 

A graph $T$ is an \term{$n$-trampoline} (sometimes also called an \term{$n$-sun}) if it has vertices $v_1,\dots, v_n, w_1, \dots, w_n$ for some $n \geq 3$ and edges $v_iv_j$ for all $i \neq j$, $w_iv_i$ for all $i$, $w_iv_{i+1}$ for all $i < n$, and $w_nv_1$.  Figure \ref{4-trampoline} below shows the 4-trampoline.

 \begin{figure}[htb]
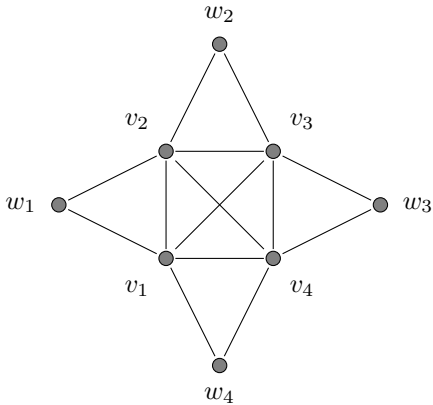

 \centering
 \captionsetup{justification=centering}
\graph[scale = 0.95]{
    -1/0/v_1/v_1/-120,
    -1/2/v_2/v_2/120,
    1/2/v_3/v_3/60,
    1/0/v_4/v_4/-60,
    -3/1/w_1/w_1/180,
    0/4/w_2/w_2/90,
    3/1/w_3/w_3/0,
    0/-2/w_4/w_4/-90
    }{
    v_1/v_2,
    v_2/v_3,
    v_3/v_4,
    v_1/v_4,
    v_2/v_4,
    v_1/v_3,
    w_1/v_1,
    w_1/v_2,
    w_2/v_2,
    w_2/v_3,
    w_3/v_3,
    w_3/v_4,
    v_1/w_4,
    v_4/w_4}
     \caption{The $4$-trampoline}
     \label{4-trampoline}
 \end{figure}

\noindent Trampoline graphs are chordal, since it is easily seen that $w_1, \dots, w_n, v_1, \dots, v_n$ is a perfect elimination order for the $n$-trampoline.  On the other hand, setting $w_{i + n} = w_i$, we note that $w_i \in N[v_{i+1}] \setminus N[v_{i+2}]$ and $w_{i+2} \in N[v_{i+2}] \setminus N[v_{i+1}]$ for all $i$ since $n \geq 3$.  Consequently, the sets $N[v_{i+1}]$ and $N[v_{i+2}]$ are incomparable for all $i$ so that no vertex of the $n$-trampoline is simple, and so, no trampoline graph is strongly chordal. 

The following characterization of strongly chordal graphs is due to Farber.

\begin{thm}[{\cite[3.3, 4.1]{Farber:strongly:chordal:graphs}}] \label{Farber} \label{strongly:chordal:graphs}
For a graph $G$, the following are equivalent:
\begin{enumerate}[label = \textnormal{(\alph*)}]
    \item $G$ is strongly chordal.
    \item $G$ has a simple elimination order.
    \item Every induced subgraph of $G$ has a simple vertex.
    \item $G$ is chordal and has no induced $n$-trampoline for any $n \geq 3$.
\end{enumerate}
\end{thm}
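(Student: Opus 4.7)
The plan is to establish the four-way equivalence by proving (a) $\Leftrightarrow$ (b), (b) $\Leftrightarrow$ (c), (c) $\Rightarrow$ (d), and (d) $\Rightarrow$ (c), with the last being the main substantive step.

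The equivalences (a) $\Leftrightarrow$ (b) $\Leftrightarrow$ (c) are essentially formal. For (b) $\Rightarrow$ (c), restricting a simple elimination order of $G$ to any induced subgraph $H$ yields a simple elimination order of $H$, whose first vertex is simple in $H$. For (c) $\Rightarrow$ (b), I would induct on $|V(G)|$: extract a simple vertex $v_1$ using (c), then apply induction to $G \setminus v_1$, which still satisfies (c). For (a) $\Rightarrow$ (b), I would directly verify that if $v_1, \dots, v_n$ is a strong elimination order, then for $v_k, v_\ell \in N[v_i]$ with $i < k \leq \ell$, the strong-elimination condition forces the closed neighborhoods of $v_k$ and $v_\ell$ within $G[v_i, \dots, v_n]$ to be nested consistently with the index, so $v_i$ is simple in $G[v_i, \dots, v_n]$. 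For (b) $\Rightarrow$ (a), I would refine a simple elimination order by re-sorting the neighbors of each $v_i$ in $G[v_i, \dots, v_n]$ according to the (total) inclusion order on their closed neighborhoods; a short check shows the refined order satisfies the strong-elimination condition.

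For (c) $\Rightarrow$ (d): simple vertices are simplicial, so (c) iterated yields a perfect elimination order, whence $G$ is chordal. If $T \subseteq G$ were an induced $n$-trampoline with $n \geq 3$, then (c) applied to $T$ would produce a simple vertex of $T$, contradicting the observation made before the statement of the theorem that no vertex of any trampoline is simple.

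The main obstacle is (d) $\Rightarrow$ (c). I would induct on $|V(G)|$, reducing to the claim that $G$ itself contains a simple vertex, since every induced subgraph of $G$ also satisfies (d). By chordality, fix a simplicial vertex $v$; if $v$ is simple we are done. Otherwise the collection $\{N[w] : w \in N[v]\}$ fails to be totally ordered by inclusion, so there exist $w_1, w_2 \in N[v]$ together with vertices $w_1' \in N[w_1] \setminus N[w_2]$ and $w_2' \in N[w_2] \setminus N[w_1]$. The crux is to convert such a configuration into an induced $n$-trampoline for some $n \geq 3$, contradicting (d): I would iteratively extend this initial incomparability to a cyclic sequence of neighbors $v_1, \dots, v_n \in N[v]$ together with external witnesses $w_1, \dots, w_n$, invoking chordality to force the required triangulation among the $v_i$'s and to rule out unwanted chords involving the $w_i$'s. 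The delicate step is choosing the starting simplicial vertex (together with the initial pair of incomparable neighborhoods) so that the iteration actually closes up into an induced trampoline rather than producing a larger or degenerate configuration; this inductive selection argument is the heart of Farber's original proof.
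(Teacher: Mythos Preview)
The paper does not prove this theorem; it is quoted as a known result of Farber with a citation to \cite[3.3, 4.1]{Farber:strongly:chordal:graphs} and no argument is given. So there is nothing in the paper to compare your proposal against.

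That said, your sketch tracks Farber's original strategy fairly closely. The equivalences (a)$\Leftrightarrow$(b)$\Leftrightarrow$(c) and the implication (c)$\Rightarrow$(d) are indeed routine, and your outlines for them are correct (in particular, your argument that restricting a simple elimination order to an induced subgraph still gives a simple elimination order is fine, since intersecting a chain of closed neighborhoods with $V(H)$ preserves the chain). For (b)$\Rightarrow$(a), note that a simple elimination order need not literally be a strong elimination order---the total order on the $N[w]$'s at each step need not agree with the vertex indexing---so your ``re-sorting'' step is genuinely needed and requires a short consistency check across different $i$'s; this is exactly what Farber does.

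Where your proposal is thin is (d)$\Rightarrow$(c). You correctly identify the starting configuration (a simplicial vertex $v$ with two neighbors having incomparable closed neighborhoods) and the goal (extract an induced trampoline), but the phrase ``iteratively extend \dots\ invoking chordality'' hides all of the content. Farber's actual argument is not a direct iteration from an arbitrary simplicial vertex: he first passes to a carefully chosen simplicial vertex (one whose neighborhood is inclusion-minimal among simplicial vertices), and then uses a separate structural lemma about chordal graphs to produce the trampoline. Without specifying how the $v_i$'s and $w_i$'s are selected and why no extra edges appear among the $w_i$'s (which is where minimality of the chosen vertex is used), the argument as written is a plan rather than a proof.
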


\subsection{MAT-Labelings}

The purpose of this subsection is to connect the notion of a strongly chordal graph with our definition of a strongly T-chordal matroid.  By definition, the cycle matroid of a graph $G$ is strongly T-chordal if there is an ordering of the edges of $G$ satisfying the properties in Definition~\ref{strongly:T-chordal}.  Although edge orderings are much less common in graph theory than vertex orderings, recent work of Tran and Tsujie shows that strongly chordal graphs are precisely the graphs that admit a certain type of edge weighting called a MAT-labeling \cite[4.10, 5.12]{MAT:labelings}, which gives a partial ordering of the edges. 

\begin{defn}
    Let $G$ be a graph and $\lambda: E(G) \to \ZZ_{> 0}$ be a labeling of its edges.  Set $\pi_k = \lambda^{-1}(k)$  and $E_k = \bigcup_{j \leq k} \pi_j$ for each $k \in \ZZ_{> 0}$, and put $E_0 = \emptyset$.  We say that $\lambda$ is a \term{MAT-labeling} of $G$ if the following conditions hold for all $k \in \ZZ_{> 0}$: \vspace{1 ex}
    \begin{enumerate}[label = (ML\arabic*)]
        \item $\pi_k$ is a forest. \vspace{1 ex}
        \item $\cl(\pi_k) \cap E_{k-1} = \emptyset$, where $\cl(\pi_k)$ denotes the closure of $\pi_k$ in the cycle matroid of $G$. \vspace{1 ex}
        \item Every $e \in \pi_k$ forms exactly $k - 1$ triangles (3-cycles) with edges in $E_{k-1}$.
    \end{enumerate}
\end{defn}

MAT-labelings can also be characterized by vertex orderings via the notion of a MAT-simplicial vertex.  The reader may want to compare the condition \ref{MS3} below with our definition of a MAT-triple in Definition \ref{strongly:T-chordal}.

\begin{defn}
    Given a graph $G$ with edge labeling $\lambda: E(G) \to \ZZ_{> 0}$, a vertex $w$ of $G$ is called \term{MAT-simplicial} if: \vspace{1 ex}
    \begin{enumerate}[label = (MS\arabic*)]
        \item $w$ is a simplicial vertex of $G$. \vspace{1 ex}
        \item $\{\lambda(vw) \mid v \in N(w)\} =\{1, 2, \dots, \ell\}$, where $\ell = \abs{N(w)}$. \vspace{1 ex}
        \item For all distinct $u, v \in N(w)$, $\lambda(uv) < \max(\lambda(uw), \lambda(vw))$. \label{MS3}
    \end{enumerate}
\end{defn}

The following proposition summarizes the essential facts that we will need about MAT-labelings.

\begin{prop}\label{MAT:labelings:for:strongly:chordal:graphs}
Let $G$ be a strongly chordal graph with MAT-labeling $\lambda$, and let $K$ be a maximal clique of $G$ with $\abs{K} = \ell$.  Write $E(G) = \pi_1 \sqcup \pi_2 \sqcup \cdots \sqcup \pi_c$ for some $c$.  Then: 
\begin{enumerate}[label = \textnormal{(\alph*)}]
\item $\abs{\pi_k \cap K} = \ell - k$ if $k \leq \ell - 1$, and $\abs{\pi_k \cap K} = 0$ otherwise.  Hence, $c = \omega(G) - 1$ where $\omega(G)$ is the clique number of $G$
\item There is a unique edge $e_K$ contained in $K$ with $\lambda(e_K) = \ell - 1$, and $K$ is the unique maximal clique containing $e_K$.
\end{enumerate}
\end{prop}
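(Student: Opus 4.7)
My strategy is to prove both parts by induction on $\ell = |K|$, exploiting the correspondence between MAT-labelings and simple vertex elimination orders established in \cite{MAT:labelings}. The base case $\ell = 2$ is immediate: the unique edge of $K$ lies in no triangle (by maximality of $K$), so (ML3) forces its label to be $1$, and both conclusions hold trivially.

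For the inductive step, the plan is first to locate a MAT-simplicial vertex $v$ whose closed neighborhood is exactly $V(K)$; the existence of such a $v$ is ensured by combining the simple elimination-order description of MAT-labelings in \cite[5.12]{MAT:labelings} with the maximality of $K$. Writing $N(v) = \{u_1, \ldots, u_{\ell-1}\}$ and $\lambda(vu_j) = j$ as prescribed by (MS2), each of the labels $1, 2, \ldots, \ell-1$ appears exactly once among edges of $K$ incident to $v$. Condition (MS3) forces $\lambda(u_iu_j) < j$ for $i < j$, so no edge of $K$ has label $\geq \ell$ and $e_K := vu_{\ell-1}$ is the unique edge of $K$ with label $\ell - 1$. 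This immediately gives $|\pi_{\ell-1} \cap K| = 1$ and $|\pi_k \cap K| = 0$ for $k \geq \ell$. Applying the inductive hypothesis to the sub-clique $\{u_1, \ldots, u_{\ell-1}\}$, viewed as a maximal clique in a carefully chosen subgraph of $G$ with a compatible MAT-labeling, yields that exactly $\ell - 1 - k$ of the edges $u_iu_j$ carry label $k$ for each $1 \leq k \leq \ell - 2$; combining with the single edge $vu_k$ of label $k$ produces $|\pi_k \cap K| = \ell - k$ and completes part (a). The equality $c = \omega(G) - 1$ then follows by specializing to a maximum clique. For part (b), the uniqueness of $e_K$ is immediate, and the $\ell - 2$ witnesses of $e_K$ demanded by (ML3) must be exactly $u_1, \ldots, u_{\ell-2}$, so $V(K) = \{v, u_{\ell-1}\} \cup W_{e_K}$ is intrinsically determined by $e_K$ and $\lambda$; any other maximal clique containing $e_K$ would admit the same description and must coincide with $K$.

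The main obstacle is the inductive reduction to a smaller subgraph: the naive restriction of $\lambda$ to $G \setminus v$ need not be a MAT-labeling, as triangles through $v$ that contribute to (ML3) are lost. I would address this by using the simple elimination order to construct a modified MAT-labeling on the induced subgraph whose restriction to the edges $\{u_iu_j\}$ has the same label distribution, which is the technical content behind \cite{MAT:labelings}.
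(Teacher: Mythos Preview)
Your inductive strategy hinges on the claim that for every maximal clique $K$ of $G$ there exists a MAT-simplicial vertex $v$ of $G$ with $N_G[v] = V(K)$. This is false in general. In the broken $3$-trampoline of Example~\ref{GMA:of:broken:3-trampoline} (vertices $v_1,v_2,v_3,w_1,w_2$), the central triangle $K=\{v_1,v_2,v_3\}$ is a maximal clique, yet every $v_i$ has a neighbor among the $w_j$'s, so no vertex of $G$ has closed neighborhood equal to $V(K)$. The appeal to \cite[5.12]{MAT:labelings} only produces \emph{some} MAT-simplicial vertex, and its neighborhood need not be the particular clique $K$ you started with; maximality of $K$ does not help, since a maximal clique can sit ``in the middle'' of the graph. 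This breaks the induction before the acknowledged obstacle about restricting $\lambda$ to $G\setminus v$ even arises, and that second obstacle is itself left as a promissory note rather than an argument.

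The paper circumvents both difficulties by a restriction to $G[K]$ rather than a deletion from $G$: it invokes \cite[4.9]{MAT:labelings} to see that $\lambda\vert_K$ is already a MAT-labeling of the complete graph $G[K]$, and then applies the known label distribution for complete graphs \cite[2.6, 4.4]{MAT:labelings} to obtain $|\pi_k\cap K|=\ell-k$ directly, with no induction or vertex-elimination needed. For part (b), rather than identifying the $(\ell-2)$ triangle witnesses of $e_K$ via (ML3), the paper observes that if $K'$ were a second maximal clique containing $e_K$ then $\lambda\vert_{K\cap K'}$ is again a MAT-labeling (by \cite[4.9]{MAT:labelings}), forcing $\lambda(e_K)\le |K\cap K'|-1<\ell-1$, a contradiction. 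Your (ML3) idea for (b) is an interesting alternative, but as written it is also incomplete: you assert that any other maximal clique $K'\ni e_K$ ``would admit the same description,'' yet you have not shown that the edges from the endpoints of $e_K$ to the remaining vertices of $K'$ lie in $E_{\ell-2}$, which is what is needed to force those vertices into your witness set $W_{e_K}$.
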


\begin{proof}
(a) Since $K$ is a maximal clique of $G$, $\lambda\vert_K$ is a MAT-labeling of $G[K]$ by \cite[4.9]{MAT:labelings}.  The first statement then follows from \cite[2.6, 4.4]{MAT:labelings}.  Applying this observation to all maximal cliques of $G$ gives $c = \omega(G) - 1$.

(b) It is immediate from part (a) that there is exactly one edge $e_K$ contained in $K$ with $\lambda(e_K) = \ell - 1$.  Suppose that there is another maximal clique $K'$ that also contains $e_K$.  Then $e_K$ is contained in $K \cap K'$.  Since $\lambda\vert_{K \cap K'}$ is a MAT-labeling of $G[K \cap K']$ by \cite[4.9]{MAT:labelings}, it follows from part (a) that 
\[ \lambda(e_K) \leq \abs{K \cap K'} - 1 < \ell - 1 = \lambda(e_K) \]
which is a contradiction.  Hence, $K$ must be the only maximal clique containing $e_K$.
\end{proof}

We now show that strongly chordal graphs admit strong edge elimination orders.  The converse is given in Theorem~\ref{SEEO:implies:strongly:chordal}.

\begin{thm} \label{strongly:chordal:implies:strongly:T-chordal}
Let $G$ be a strongly chordal graph with MAT-labeling $\lambda$, and let $\prec$ be any total order on $E(G)$ refining the partial order given by $e < e'$ if and only if $\lambda(e) > \lambda(e')$.  Then $\prec$ is a strong edge elimination order for $G$.
\end{thm}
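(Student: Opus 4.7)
The plan is induction on $|V(G)|$. By Tran--Tsujie, a strongly chordal graph $G$ with MAT-labeling $\lambda$ admits a MAT-simplicial elimination order $w_1, \dots, w_n$, in which each $w_j$ is MAT-simplicial in the induced subgraph $G_{\ge j} := G[w_j, \dots, w_n]$. The base case $|V(G)| \le 3$ is trivial since $G$ has no cycles of length $\ge 4$. Setting $G' := G - w_1$, the graph $G'$ inherits a MAT-labeling, and by the inductive hypothesis $\prec|_{E(G')}$ is a strong edge elimination order for $G'$. Every cycle of $G$ contained in $G'$ is therefore handled; it remains to verify the MAT-triple property for cycles $C$ of length $k \ge 4$ in $G$ that pass through $w_1$.

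Write $C = w_1 - a - x_2 - \cdots - x_{m-1} - b - w_1$ with $m = k-1$, and fix $e \in C \setminus \min_\prec C$. Because $w_1$ is MAT-simplicial in $G$, property (MS1) forces $ab \in E(G)$, and (MS3) gives $\lambda(ab) < \max(\lambda(w_1 a), \lambda(w_1 b))$. If $e \notin \{w_1 a, w_1 b\}$, then $(w_1 a, w_1 b, ab)$ is a MAT-triple for $C \setminus e$: the two edges share $w_1$, and the inequality just stated translates to $ab \succ \min_\prec(w_1 a, w_1 b)$. Otherwise, WLOG $e = w_1 b$, and we consider the cycle $C' := (C \setminus \{w_1 a, w_1 b\}) \cup \{ab\}$ of length $k - 1$ in $G'$; note that $C' \setminus \{ab\} \subseteq C \setminus e$. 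When $k \ge 5$ and $ab \ne \min_\prec C'$, the inductive hypothesis applied to $G'$ with $e'' = ab$ produces a MAT-triple for $C' \setminus ab$, which is simultaneously a MAT-triple for $C \setminus e$.

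The core obstacle is the exceptional case $ab = \min_\prec C'$ (together with the base case $k = 4$, where $C'$ is a triangle). Here $\lambda(ab)$ is maximal in $C'$, so every cycle edge $e_j := x_j x_{j+1}$ of $C$ satisfies $\lambda(e_j) \le \lambda(ab) < \max(\lambda(w_1 a), \lambda(w_1 b))$; combined with $e \ne \min_\prec C$, one deduces that the maximum label $M_C$ in $C$ equals $\lambda(w_1 a)$. The plan is to apply the inductive hypothesis to $G'$ with alternative choices of $e''$ (such as $e_1$ or $e_{m-1}$), producing MAT-triples at various corners of $C'$. Any such triple located at an interior vertex $x_j$ with $2 \le j \le m-1$ is simultaneously a corner of $C \setminus e$ and finishes the case. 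The delicate sub-situation is when every such application places the MAT-triple at $a$ or $b$; there one combines the resulting chord-label inequalities with (MS3) applied at the next MAT-simplicial vertex of $V(C) \setminus \{w_1\}$ (typically $a$) to produce a chord in $C$ forming a triangle at some corner of $C \setminus e$ with label strictly less than the maximum of the two incident cycle edges. Ensuring strict inequality independently of how $\prec$ breaks label ties is the most delicate step of the argument.
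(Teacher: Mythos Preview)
Your approach differs from the paper's in two significant ways: you induct on $|V(G)|$ rather than on the clique number $\omega(G)$, and you attempt to verify the MAT-circuit condition for cycles of all lengths directly. The paper's proof instead exploits Theorem~\ref{GMA:quadratic:GB}(d): since $G$ is chordal, $\GMA{G}$ is quadratic by Proposition~\ref{C-chordal:implies:quadratic:GMA}(a), so it suffices to check that every $4$-cycle is a MAT-circuit. This reduction is the key simplification you are missing, and without it your inductive step becomes substantially harder to close.

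Your proof has a genuine gap in the exceptional case. Take $k = 4$ with $e = w_1 b$ and $ab = \min_\prec C'$. The set $C \setminus e = \{w_1 a,\, a x_2,\, x_2 b\}$ admits only two candidate MAT-triples: the triangle $(a x_2, x_2 b, ab)$, which fails because $ab \preceq a x_2, x_2 b$ by hypothesis, and the triangle $(w_1 a, a x_2, w_1 x_2)$, which requires the edge $w_1 x_2$ to exist and to satisfy $w_1 x_2 \succ \min(w_1 a, a x_2)$. You establish neither. Your final paragraph offers only a sketch---apply the inductive hypothesis with alternate choices of $e''$ and invoke (MS3) at a ``next MAT-simplicial vertex of $V(C) \setminus \{w_1\}$''---and you yourself concede that ``ensuring strict inequality independently of how $\prec$ breaks label ties is the most delicate step.'' That is an acknowledgment that the argument is incomplete, not a proof. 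For $k \ge 5$ the same obstruction persists: the MAT-triples for $C' \setminus e''$ furnished by induction may all sit at the vertex $a$ or $b$ and involve the edge $ab \notin C$, and you have not explained how to convert any such triple into one for $C \setminus e$. The paper confronts exactly this difficulty in its Case~2, and resolves it by an explicit argument: it passes to maximal cliques $K \supseteq \{a,b,c\}$ and $K' \supseteq \{c,d,e\}$, uses that $\lambda$ restricts to a MAT-labeling on $G[K \cup K']$, and then iteratively removes MAT-simplicial vertices from $V(K \cup K') \setminus K$ until the vertex $v$ becomes MAT-simplicial, at which point (MS3) yields the needed strict inequality $\lambda(c) < \max(\lambda(a),\lambda(b))$. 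This is precisely the content you have deferred.
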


\begin{proof}
We proceed by induction on $\omega(G)$.  If $\omega(G) \leq 2$, then $\prec$ is trivially a strong elimination order since $E(G) \subseteq \pi_1$ is a forest.  So, we may assume that $\omega(G) \geq 3$ and that the theorem holds for all strongly chordal graphs with strictly smaller clique number.

By Proposition~\ref{MAT:labelings:for:strongly:chordal:graphs}, it is clear that $\lambda$ restricts to a MAT-labeling of the graph $G' = G \setminus \pi_{\omega(G) - 1}$ so that $G'$ is strongly chordal with clique number strictly smaller than $\omega(G)$.  Hence, $\prec$ restricts to a strong edge elimination order on $G'$ by induction.  

 As $G$ is chordal, $B_{M(G)}$ is quadratic by Proposition~\ref{C-chordal:implies:quadratic:GMA}(a).  Thus by Theorem~\ref{GMA:quadratic:GB}, it suffices to show that any $4$-cycle is a MAT-circuit. 
Let $C$ be a 4-cycle in $G$.  If $C$ is contained in $G'$, there is nothing to prove, so we may assume that at least one of the edges $e$ of $C$ is in $\pi_{\omega(G) - 1}$.  Without loss of generality, we may assume that $e = \min_\prec C$.

{\sc Case (1):} Suppose that the vertices of $C$ form a clique of $G$.  In that case, $C$ is contained in a maximal clique $K$ of $G$, necessarily of size $\omega(G)$.  Since $e$ is the unique edge of $K$ with $\lambda(e) = \omega(G) - 1$, it follows that every other edge $a$ of $K$ has label less than $e$ so that $e \prec a$ by Proposition~\ref{MAT:labelings:for:strongly:chordal:graphs}.  Hence, it is easily seen that $C \setminus a$ always has a MAT-triple for every $a \in C \setminus e$.

{\sc Case (2):} Suppose that the vertices of $C$ do not form a clique of $G$.  As $G$ is chordal, the induced subgraph on the vertices of $C$ must have the form shown below.
\begin{center}
\edgelabeledgraph{
    -1/0/2/v/-120,
    -1/2/1//120,
    1/2/3//60,
    1/0/4//-60
    }{
    2/1/0.5/above/a,
    4/3/0.5/above/d,
    1/4/0.5/above/c,
    2/4/0.5/below/b,
    1/3/0.5/above/e}
\end{center}
As in the previous case, we have $e \prec c$ so that $C \setminus a$ and $C \setminus b$ both have $\{c, d, e\}$ as a MAT-triple.  It remains to show that $c \neq \min\{a, b, c\}$.  We will show that $\lambda(c) < \max\{\lambda(a), \lambda(b)\}$.

Let $K$ and $K'$ be maximal cliques containing $\{a, b, c\}$ and $\{c, d, e\}$ respectively. By \cite[4.9]{MAT:labelings}, we know that $\lambda$ restricts to a MAT-labeling on $G[K]$, $G[K']$, and $G[K \cap K']$.  Thus, $\lambda$ restricts to a MAT-labeling on $H = G[K \cup K']$ by \cite[5.8]{MAT:labelings}.  Since $H$ is not a clique, the proof of \cite[5.2]{MAT:labelings} shows that there is a MAT-simplicial vertex $w$ for $H$ in $V(H) \setminus K$.  If $w = v$, then $\lambda(c) < \max\{\lambda(a), \lambda(b)\}$ as wanted.  Otherwise, if $w \neq v$, we can replace $H$ with $H \setminus w$ to obtain a strictly smaller induced subgraph containing $K \cup v$, which is not a clique and on which $\lambda$ restricts to a MAT-labeling by \cite[5.3]{MAT:labelings}.  Again, we see that $H$ must have a MAT-simplicial vertex in $V(H) \setminus K$, and so, this process eventually terminates with an induced subgraph in which $v$ is MAT-simplicial so that $\lambda(c) < \max\{\lambda(a), \lambda(b)\}$ as wanted.
\end{proof}

\begin{cor} \label{broken:trampolines:are:G-quadratic}
If $G$ is a strongly chordal graph, then the graded M\"obius algebra $\GMA{G}$ of the cycle matroid $M(G)$ has a quadratic Gr\"obner basis.  Hence, $\GMA{G}$ is a Koszul algebra.
\end{cor}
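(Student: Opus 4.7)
The plan is essentially to chain together two results already established in the excerpt. First, I would invoke Theorem \ref{strongly:chordal:implies:strongly:T-chordal}, which says that if $G$ is strongly chordal with MAT-labeling $\lambda$, then any total order $\prec$ on $E(G)$ refining the partial order $e < e'$ iff $\lambda(e) > \lambda(e')$ is a strong edge elimination order for $G$. By definition, this means that $M(G)$ is strongly T-chordal, so condition (b) of Theorem \ref{GMA:quadratic:GB} is satisfied.

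Next, I would apply the implication (b) $\Implies$ (a) (or more specifically (b) $\Implies$ (c)) of Theorem \ref{GMA:quadratic:GB}: because $M(G)$ is strongly T-chordal, there exists a monomial order (in fact a lex order) with respect to which the defining ideal of $\GMA{G}$ has a quadratic initial ideal, i.e., $\GMA{G}$ has a quadratic Gr\"obner basis.

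Finally, it is a standard (and already cited) fact that any algebra defined by an ideal with a quadratic Gr\"obner basis is Koszul; this is noted in the paper immediately after the statement of Theorem~A as the implication (b) $\Implies$ (c). Therefore $\GMA{G}$ is Koszul. There is no real obstacle here — the corollary is simply the composition of Theorem \ref{strongly:chordal:implies:strongly:T-chordal} (the substantive graph-theoretic input, whose hard part was establishing that $4$-cycles are MAT-circuits via an induction on clique number using properties of MAT-labelings) with the algebraic equivalence in Theorem \ref{GMA:quadratic:GB}. The proof should be two or three sentences.
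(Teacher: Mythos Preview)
Your proposal is correct and matches the paper's intended argument: the corollary is stated without proof precisely because it is the immediate concatenation of Theorem~\ref{strongly:chordal:implies:strongly:T-chordal} (together with the cited fact from \cite{MAT:labelings} that strongly chordal graphs admit MAT-labelings) and Theorem~\ref{GMA:quadratic:GB}. There is nothing to add.
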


\begin{example} \label{broken:4-trampoline:is:strongly:T-chordal}
Consider the graph $G$ shown below.  It can easily be checked that the labels on the edges of $G$ form a MAT-labeling.
\begin{center}
\edgelabeledgraph{
    -1/0/v_1//-120,
    -1/2/v_2//120,
    1/2/v_3//60,
    1/0/v_4//-60,
    -3/1/w_1//180,
    0/4/w_2//60,
    3/1/w_3//0
    }{
    v_1/v_2/0.5/above/1, 
    v_2/v_3/0.5/above/1,
    v_4/v_3/0.5/above/1,
    v_1/v_4/0.5/below/3,
    v_2/v_4/0.75/above/2,
    v_1/v_3/0.25/above/2,
    w_1/v_1/0.5/below/1,
    w_1/v_2/0.5/above/2,
    w_2/v_2/0.5/above/1,
    w_2/v_3/0.5/above/2,
    w_3/v_3/0.5/above/1,
    w_3/v_4/0.5/below/2}
\end{center}

Hence, $G$ has a strong edge elimination order    
\begin{center}
\edgelabeledgraph{
    -1/0/v_1//-120,
    -1/2/v_2//120,
    1/2/v_3//60,
    1/0/v_4//-60,
    -3/1/w_1//180,
    0/4/w_2//60,
    3/1/w_3//0
    }{
    v_1/v_2/0.5/above/7, 
    v_2/v_3/0.5/above/8,
    v_4/v_3/0.5/above/9,
    v_1/v_4/0.5/below/1,
    v_2/v_4/0.75/above/2,
    v_1/v_3/0.25/above/3,
    w_1/v_1/0.5/above/10,
    w_1/v_2/0.5/above/4,
    w_2/v_2/0.5/above/11,
    w_2/v_3/0.5/above/5,
    w_3/v_3/0.5/above/12,
    w_3/v_4/0.5/above/6}
\end{center}
where $1 \prec 2 \prec 3 \prec \cdots \prec 12$, as guaranteed by the above theorem.
\end{example}

Here we prove that the converse to Theorem~\ref{strongly:chordal:implies:strongly:T-chordal} holds and thus obtain a new characterization of strongly chordal graphs.  While this already follows via the characterization of Koszul graded M\"obius algebras, we include a direct proof here.

\begin{thm}
\label{SEEO:implies:strongly:chordal} If $G$ is a graph with a strong edge elimination order $\prec$, then $G$ is strongly chordal.   
\end{thm}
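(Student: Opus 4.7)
My plan is to apply Farber's characterization (Theorem~\ref{Farber}) and show that $G$ is both chordal and free of induced $n$-trampolines for $n \geq 3$. Chordality comes almost for free: if $C$ were a chordless cycle of length at least four, then for any $e \in C \setminus \min_\prec C$ the set $C \setminus e$ would need a MAT-triple $\{u, v, w\}$. Since $\{u, v, w\}$ is a triangle in $G$, the edges $u$ and $v$ share a vertex, and hence are adjacent edges of $C$; then $w$ must join the far endpoints of $u$ and $v$, i.e., $w$ is a chord of $C$ --- contradicting chordlessness.

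For the trampoline case, suppose $G$ contains an induced $n$-trampoline $T$ with standard labeling $v_1, \dots, v_n, w_1, \dots, w_n$. Because $T$ is an induced subgraph, every triangle of $G$ using two edges of $T$ lies entirely in $T$, so $\prec$ restricts to a strong edge elimination order for $M(T)$, and I may assume $G = T$. I will write $a_i = w_iv_i$, $b_i = w_iv_{i+1}$, $c_i = v_iv_{i+1}$, and $d_i = v_iv_{i+2}$ (indices mod $n$); the triangles of $T$ are exactly the three-element subsets of $\{v_1, \dots, v_n\}$ together with the sets $\{w_k, v_k, v_{k+1}\}$.

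The crux of the argument is to force each $d_i$ to be small relative to $c_i$ and $c_{i+1}$ using two 4-cycles per index. The 4-cycle $C_i^+$ on vertices $v_i, v_{i+1}, w_{i+1}, v_{i+2}$ has edge set $\{c_i, a_{i+1}, b_{i+1}, d_i\}$, and a short enumeration using the triangle structure of $T$ shows that the only triangles of $T$ using two edges of $C_i^+$ are $\{a_{i+1}, b_{i+1}, c_{i+1}\}$ and $\{c_i, d_i, c_{i+1}\}$, both completing to $c_{i+1}$. Imposing that $C_i^+$ is a MAT-circuit then forces $c_{i+1} \succ \min(c_i, d_i)$; the symmetric 4-cycle $C_i^-$ on $v_{i+2}, v_{i+1}, w_i, v_i$ with edges $\{c_{i+1}, b_i, a_i, d_i\}$ yields $c_i \succ \min(c_{i+1}, d_i)$. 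A quick case check on these two inequalities gives $d_i \prec \min(c_i, c_{i+1})$ for every $i$.

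To conclude, I split on $n$. When $n \geq 4$, the inner $n$-cycle $v_1 v_2 \cdots v_n v_1$ is a matroid circuit of size at least four, and its only candidate MAT-triples are of the form $\{c_i, c_{i+1}, d_i\}$, each requiring $d_i \succ \min(c_i, c_{i+1})$ --- a direct contradiction to the previous step. When $n = 3$, one has $d_i = v_iv_{i-1} = c_{i-1}$, so the key constraint becomes $c_{i-1} \prec c_i$ for each $i \in \ZZ/3\ZZ$, yielding the cyclic contradiction $c_3 \prec c_1 \prec c_2 \prec c_3$. The main obstacle is identifying the correct pair of 4-cycles $C_i^\pm$ whose MAT-circuit conditions conspire to pin down the position of $d_i$ relative to $c_i$ and $c_{i+1}$; everything else reduces to a careful triangle enumeration inside the trampoline.
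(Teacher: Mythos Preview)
Your proof is correct and follows essentially the same approach as the paper's: both invoke Farber's characterization, reduce to an induced $n$-trampoline $T$ (using that inducedness forces all relevant triangles to stay in $T$), and then use the $4$-cycles through the $w_i$ vertices to force the diagonal $d_i = v_iv_{i+2}$ to be $\prec$-smaller than both $c_i$ and $c_{i+1}$, which makes the inner cycle fail the MAT-circuit condition for $n \geq 4$ and gives a cyclic inequality for $n = 3$. The only organizational difference is that the paper phrases the key step as ``$v_1v_n = \min C$'' and then invokes the cyclic symmetry of the trampoline to get the contradiction, whereas you derive the inequality $d_i \prec \min(c_i, c_{i+1})$ uniformly for all $i$ using the explicit pair $C_i^\pm$; these are the same $4$-cycles and the same constraints, just packaged differently.
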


\begin{proof}
    By the definition of strong edge elimination orders, it follows immediately that $G$ is chordal.  Suppose $G$ is not strongly chordal.  Then by Theorem~\ref{Farber}, $G$ has an induced subgraph $T$ isomorphic to an $n$-trampoline for some $n \ge 3$.  Using the previous notation, let $v_1,\ldots,v_n,w_1,\ldots,w_n$ denote the vertices of $T$ so that $v_1,\ldots,v_n$ induce a clique in $T$, and consider the cycle $C = \{v_1v_2,\  \dots,\  v_{n-1}v_n, \ v_1v_n\}$.  
    
    We claim that $v_1v_n = \min C$. We consider two cases.  Suppose first that $n = 3$ and that the vertices of $B = T \setminus w_3$ are labeled as shown below.  
    \begin{center}
\edgelabeledgraph{
    -1/0/v_1/v_1/-120,
    0/2/v_2/v_2/90,
    1/0/v_3/v_3/-60,
    -2/2/w_1/w_1/120,
    2/2/w_2/w_2/60
    }{
    v_1/v_2/0.5/above/, 
    v_1/v_3/0.5/below/, 
    v_2/v_3/0.5/above/,
    v_1/w_1/0.5/below/,
    w_1/v_2/0.5/above/,
    w_2/v_2/0.5/above/,
    w_2/v_3/0.5/below/}
\end{center}
    If the claim does not hold, then without loss of generality we may assume that $v_1v_2 \prec v_1v_3, v_2v_3$.  Consider the 4-cycle $C' = \{v_1w_1, w_1v_2, v_2v_3, v_1v_3\}$.  At least one of the edges $w_1v_2,w_1v_1$ is not $\min C'$.  It follows that $C'$ is not a MAT-circuit, and we have a contradiction. Hence, the claim holds when $n = 3$.

 Suppose now that $n \geq 4$.  If the edge $e = v_1v_n$ satisfies $e \neq \min C$, then there must exist edges $u, v \in C \setminus e$ and $w \notin C$ such that $\{u, v, w\}$ is a 3-cycle and $w \succ \min(u, v)$.  It follows without loss of generality that $u = v_iv_{i+1}$, $v = v_{i+1}v_{i+2}$, and $w = v_iv_{i+2}$ for some $i < n - 1$.  Since $G[v_i,  v_{i+1},  v_{i+2},  w_i,  w_{i+1}]$ is isomorphic to the graph $B$ above, it follows from the preceding paragraph that $w \prec u, v$, which is a contradiction.  Hence, we must have $e = \min C$ as claimed.

 However, a completely analogous argument shows that $v_1v_2 = \min C$, which is a contradiction as $n \geq 3$.  Therefore, $G$ must be strongly chordal.
\end{proof}

\section{Graded M\"obius Algebras of Strongly Chordal Graphs}\label{proof:of:main:thm}

 \noindent
In this section, we study the Koszul property of graded M\"obius algebras of graphic matroids, and finish the proof of Theorem~A.

Clearly, if $G$ is chordal, then the cycle matroid $M(G)$ is C-chordal, and likewise, if $M(G)$ is T-chordal, then $G$ is chordal.  And so, as an immediate consequence of Proposition \ref{C-chordal:implies:quadratic:GMA} we have the following.

\begin{thm} \label{quadratic:GMA:iff:chordal}
For a graph $G$, the graded M\"obius algebra $\GMA{G}$ is quadratic if and only if $G$ is chordal.
\end{thm}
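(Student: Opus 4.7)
The proof is essentially an immediate corollary of Proposition~\ref{C-chordal:implies:quadratic:GMA} together with two elementary observations about how C-chordality, T-chordality, and graph-theoretic chordality translate for a cycle matroid $M(G)$. My plan is to verify the following chain of implications:
\[
G \text{ chordal} \;\Longrightarrow\; M(G) \text{ C-chordal} \;\Longrightarrow\; \GMA{G} \text{ quadratic} \;\Longrightarrow\; M(G) \text{ T-chordal} \;\Longrightarrow\; G \text{ chordal}.
\]
The middle two implications are exactly parts (a) and (b) of Proposition~\ref{C-chordal:implies:quadratic:GMA}. So only the first and last implications need a short argument, and neither is difficult.

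For the first implication, I would argue as follows. The circuits of $M(G)$ are precisely the edge sets of cycles of $G$. If $C$ is a circuit of $M(G)$ of size at least four, then the corresponding cycle in $G$ has length at least four and so admits a chord $e \in E(G)$. This chord splits $C$ into two shorter cycles $A$ and $B$ with $A \cap B = \{e\}$ and $(A \setminus e) \sqcup (B \setminus e) = C$, exhibiting the C-chordal condition.

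For the last implication, suppose $M(G)$ is T-chordal, and let $C$ be a cycle of $G$ of length at least four. Viewing $C$ as a circuit of $M(G)$, T-chordality produces edges $u, v \in C$ and $w \in E(G) \setminus C$ such that $\{u, v, w\}$ is a circuit of $M(G)$, i.e., a triangle in $G$. Two edges of a triangle share a vertex, so $u$ and $v$ meet at a common vertex $x$ of $C$, and $w$ joins the two endpoints of $u$ and $v$ distinct from $x$. Because $C$ has length at least four, these two endpoints are non-adjacent along $C$, so $w$ is a chord of $C$. Hence $G$ is chordal.

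There is no real obstacle here; the one thing to be mindful of is making sure the triangle produced by T-chordality genuinely yields a chord of $C$ (as opposed to, say, an edge already in $C$ or an edge with an endpoint outside $V(C)$), which is immediate from the fact that $w \notin C$ together with the geometry of the triangle $\{u,v,w\}$ in the graph $G$. With these two short verifications in hand, the theorem follows directly from Proposition~\ref{C-chordal:implies:quadratic:GMA}.
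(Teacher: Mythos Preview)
Your proposal is correct and follows exactly the same approach as the paper: the paper states just before the theorem that ``Clearly, if $G$ is chordal, then the cycle matroid $M(G)$ is C-chordal, and likewise, if $M(G)$ is T-chordal, then $G$ is chordal,'' and then invokes Proposition~\ref{C-chordal:implies:quadratic:GMA}. Your write-up simply supplies the short verifications of these two ``clear'' implications, which the paper leaves to the reader.
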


However, not all chordal graphs have a graded M\"{o}bius algebra that is Koszul.  To characterize the Koszul property for graded M\"{o}bius algebras of graphic matroids, a stronger notion of chordality is needed; it turns out that strongly chordal graphs provide exactly the right notion.

\begin{thm} \label{Koszul:imples:strongly:chordal}
For a graph $G$, 
if $\GMA{G}$ is Koszul then $G$ is strongly chordal.
\end{thm}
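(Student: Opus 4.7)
I would argue by contrapositive: assuming $G$ is not strongly chordal, I aim to show that $\GMA{G}$ is not Koszul. Since Koszul algebras are always quadratic, Theorem~\ref{quadratic:GMA:iff:chordal} lets me reduce to the case that $G$ is chordal. Farber's characterization (Theorem~\ref{Farber}(d)) then guarantees that $G$ contains an induced $n$-trampoline $T_n$ for some $n \ge 3$.

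My next step would be to descend Koszulness from $\GMA{G}$ to $\GMA{T_n}$. Because $T_n$ is induced in $G$, a direct check shows that $E(T_n)$ is a flat of $M(G)$ and that $\cl_{M(G)}(I) = \cl_{M(T_n)}(I) \subseteq E(T_n)$ for every independent $I \subseteq E(T_n)$; consequently $\L(M(T_n))$ identifies with the interval $[\emptyset, E(T_n)]$ in $\L(M(G))$, yielding an inclusion of graded algebras $\iota\colon \GMA{T_n} \hookrightarrow \GMA{G}$. I would then define $\pi\colon \GMA{G} \to \GMA{T_n}$ on basis elements by
\[
\pi(y_F) = \begin{cases} y_F & \text{if } F \subseteq E(T_n), \\ 0 & \text{otherwise,} \end{cases}
\]
and verify multiplicativity using the flatness of $E(T_n)$: joins inside $E(T_n)$ stay inside and agree with those computed in $M(T_n)$, while joins involving any flat outside $E(T_n)$ necessarily exceed $E(T_n)$. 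Since $\pi \circ \iota = \mathrm{id}$, the inclusion $\iota$ is a graded algebra retract, and retracts of Koszul algebras are Koszul (as $\Tor^{\GMA{T_n}}(\kk,\kk)$ appears as a bigraded direct summand of $\Tor^{\GMA{G}}(\kk,\kk)$).

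It then suffices to prove that $\GMA{T_n}$ is not Koszul for each $n \ge 3$. The approach I would take is to compute the Hilbert series $\HS_{\GMA{T_n}}(t) = \sum_{F \in \L(M(T_n))} t^{\rk F}$ directly as the rank-generating function of $\L(M(T_n))$, using the bijection between flats of a graphic matroid and partitions of the vertex set into blocks with connected induced subgraphs. I would then invoke the Koszul identity $\HS_A(t)\,\P^A(-t) = 1$: if $\GMA{T_n}$ were Koszul, the formal inverse $1/\HS_{\GMA{T_n}}(-t)$ would be a Poincar\'e series and, in particular, have non-negative integer coefficients. Exhibiting a negative coefficient in this inverse would complete the contradiction.

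The principal obstacle is this final numerical step. For $n = 3$ the lattice of flats is of manageable size and the inversion can be carried out explicitly (by hand or with computer algebra) to confirm the obstruction. For larger $n$, I would seek a uniform argument leveraging the rotational symmetry of $T_n$ and an inductive description of its flats; as a backup, I would compute $\beta^{\GMA{T_n}}_{3, j}(\kk)$ for some $j \ge 4$ directly from the presentation of Proposition~\ref{GMA:presentation}, exhibiting a nonlinear syzygy arising from the interaction of the length-$n$ cycle $v_1 v_2 \cdots v_n v_1$ around the inner clique of $T_n$ with the outer triangles $\{v_iv_{i+1},\, w_iv_i,\, w_iv_{i+1}\}$.
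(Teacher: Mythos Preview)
Your overall strategy---reducing to the chordal case, locating an induced trampoline $T_n$ via Theorem~\ref{Farber}, and descending Koszulness from $\GMA{G}$ to $\GMA{T_n}$ through an algebra retract---matches the paper's approach, and your lattice-theoretic justification of the retract (using that $E(T_n)$ is a flat of $M(G)$) is correct.

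The genuine gap is the final step, showing that $\GMA{T_n}$ is not Koszul. Your primary method, exhibiting a negative coefficient in $1/\HS_{\GMA{T_n}}(-t)$, does not work even for $n=3$. One computes
\[
\HS_{\GMA{T_3}}(t) = 1 + 9t + 28t^2 + 35t^3 + 15t^4 + t^5,
\]
and the coefficients of $1/\HS_{\GMA{T_3}}(-t)$ begin $1,\,9,\,53,\,260,\,1156,\,4845,\,19551,\,76912,\ldots$, all positive; there is no sign obstruction. This is consistent with the computations recorded in Section~\ref{open:problems}: the minimal resolution of $\kk$ over $\GMA{T_n}$ is linear for $n$ steps, and the first nonlinear Betti number is $\beta_{n+1,2n-1}=1$, far too small relative to the linear strand to force a sign change in the alternating sum. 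For the same reason your backup fails outright: since the resolution is linear through homological degree $n \ge 3$, one has $\beta^{\GMA{T_n}}_{3,j}(\kk)=0$ for every $j\ge 4$, so there is no nonlinear third syzygy to exhibit.

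The paper's argument is considerably more delicate. Lemma~\ref{poincare:lemma} realizes $\GMA{T_n}$ as an extension of the Koszul algebra $\GMA{B_n}$ of the broken trampoline and yields a closed formula expressing $\P^{\GMA{T_n}}(s,t)$ in terms of $\P^{\GMA{B_n}}(s,t)$ and $\P^{\GMA{B_n}}_{\GMA{B_n}/(y_a)}(s,t)$. Koszulness of $\GMA{T_n}$ is thereby equivalent to $(0:_{\GMA{B_n}} y_a)$ having a linear $\GMA{B_n}$-resolution. For $n=3$ this is ruled out by an explicit change-of-coordinates computation showing that a certain colon ideal acquires a minimal quadratic generator. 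For $n\ge 4$ one uses Lemma~\ref{GMAs:and:contraction} to identify $\GMA{B_n}/(0:y_a)\cong \GMA{T_{n-1}}$ and invokes induction.
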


Recall that strongly chordal graphs can be characterized as those chordal graphs that do not contain an induced trampoline.  If $T$ is an $n$-trampoline with vertices labeled $v_1, \dots, v_n, w_1, \dots, w_n$ as in Section \ref{MAT-labelings}, we call a graph isomorphic to $T \setminus w_n$ a \term{broken $n$-trampoline}.  For example, the broken 3-trampoline is the graph of Example~\ref{GMA:of:broken:3-trampoline}, and the broken 4-trampoline is the graph of Example~\ref{broken:4-trampoline:is:strongly:T-chordal}.  It is easily seen that the vertex order $w_1, \dots, w_{n-1}, v_1, \dots, v_n$ is a simple elimination order for the broken trampoline $T \setminus w_n$.  Hence, broken trampolines are strongly chordal.  They will play an important role in the proof of Theorem \ref{Koszul:imples:strongly:chordal}.  

\begin{lemma} \label{GMAs:and:contraction}
Let $\GMA{M} \iso S/Q$ as in Proposition \ref{GMA:presentation} and $a \in E$.  Then:
\begin{enumerate}[label = \textnormal{(\alph*)}]
    \item $(0 :_{\GMA{M}} y_a)$ is generated by $y_a$ and the binomials $y_{C \setminus i} - y_{C \setminus j}$ for all circuits $C$ of $M/a$ and all $i, j \in C$.  
    \item $\GMA{M}/(0 :_{\GMA{M}} y_a) \iso \GMA{N}$, where $N = \si(M/a)$, the simplification of $M/a$.
    \item If $M$ is C-chordal, then 
    \[ (0 :_{\GMA{M}} y_a) = (y_a) + ( y_j - y_i \mid \{a, i, j\} \;\text{a circuit of $M$}). \]
\end{enumerate}
\end{lemma}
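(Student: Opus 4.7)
For part (a), the containment $\supseteq$ is a direct verification: if $C$ is a circuit of $M/a$, then either $C$ is already a circuit of $M$ (so $y_{C \setminus i} - y_{C \setminus j}$ vanishes in $\GMA{M}$ via $L \subseteq Q$), or $C \cup a$ is a circuit of $M$ (so $y_a (y_{C \setminus i} - y_{C \setminus j}) = y_{(C \cup a) \setminus i} - y_{(C \cup a) \setminus j} \in L$). For $\subseteq$, I would decompose any annihilator element $r = \sum_F c_F y_F$ in the flat basis as $r = r_1 + r_2$ according to whether $a \in F$. Each summand of $r_1$ satisfies $y_F = y_a \cdot y_G$ for some flat $G \not\ni a$ with $\cl(G \cup a) = F$, so $r_1 \in (y_a)$. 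The condition $y_a r_2 = 0$ forces the coefficients of $r_2$ to sum to zero on each fiber of the map $F \mapsto \cl(F \cup a)$, writing $r_2$ as a combination of differences $y_F - y_{F'}$ with $\cl(F \cup a) = \cl(F' \cup a)$ and $a \notin F, F'$. Realizing such $F, F'$ via independent sets $I, I' \subseteq E \setminus a$ makes $I, I'$ independent in $M/a$ with equal closures there, and Proposition~\ref{GMA:presentation} applied to $M/a$ expresses $y_F - y_{F'} = y_I - y_{I'}$ as a combination of circuit binomials of $M/a$ modulo squares (which vanish in $\GMA{M}$).

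For part (b), the short exact sequence
\[
0 \longto (0 :_{\GMA{M}} y_a) \longto \GMA{M} \xrightarrow{\cdot y_a} y_a \GMA{M} \longto 0
\]
identifies $\GMA{M}/(0 :_{\GMA{M}} y_a)$ with $y_a \GMA{M}$, which has $\kk$-basis $\{y_G \mid G \in \L(M),\ a \in G\}$. The bijection $G \mapsto G \setminus a$ between flats of $M$ through $a$ and flats of $M/a$ (equivalently, of $N = \si(M/a)$) yields matching Hilbert functions up to the degree shift accounting for the factor of $y_a$. I would then combine this with the natural surjection $\GMA{M}/(0 :_{\GMA{M}} y_a) \onto \GMA{N}$ sending $y_a \mapsto 0$ and $y_i \mapsto y_{[i]}$ (the parallelism class in $N$) for $i \in E \setminus a$; the Hilbert function match will force this surjection to be an isomorphism.

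For part (c), the containment $\supseteq$ is immediate since $y_a(y_j - y_i) = y_{aj} - y_{ai} \in L$ whenever $\{a, i, j\}$ is a circuit. For $\subseteq$, by part (a) it suffices to show each binomial $y_{D \setminus i} - y_{D \setminus j}$ for $D$ a circuit of $M/a$ lies in the putative ideal $(y_a) + K$, where $K := (y_j - y_i \mid \{a,i,j\}\text{ is a circuit of }M)$. If $D$ is itself a circuit of $M$, the binomial vanishes in $\GMA{M}$; otherwise $C := D \cup a$ is a circuit of $M$, and I will induct on $|C|$. The base $|C| = 3$ is immediate. For $|C| \geq 4$, invoking C-chordality yields a decomposition $C = (A \setminus e) \sqcup (B \setminus e)$ with $A \cap B = \{e\}$ and $|A|, |B| < |C|$; since $e \notin C$, we have $e \neq a$, and WLOG $a \in A$, $e \in B$, $a \notin B$. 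The inductive hypothesis on the circuit $A$ supplies the substitution
\[
y_{A \setminus \{a, e\}} \equiv y_e \cdot y_{A \setminus \{a, e, q\}} \pmod{(y_a) + K}
\]
for any $q \in A \setminus \{a, e\}$, while the circuit $B$ supplies the ambient $\GMA{M}$-identity $y_{B \setminus e} = y_e \cdot y_{B \setminus \{e, b\}}$ for any $b \in B \setminus e$. Case analysis based on whether $i, j$ both lie in $A \setminus \{a, e\}$, both in $B \setminus e$, or one in each, together with these two substitutions, reduces every binomial modulo $(y_a) + K$ to a multiple of $y_{B \setminus i} - y_{B \setminus j} = 0$ in $\GMA{M}$.

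The main obstacle will be the inductive step of part (c). The subtle point is that the binomial coming directly from the subcircuit $B$ is not itself zero in $\GMA{M}$; the circuit relation for $B$ only activates after multiplication by $y_e$, and the role of the inductive hypothesis on $A$ is precisely to supply that factor of $y_e$ modulo $(y_a) + K$. The mixed case where $i$ and $j$ lie on different sides of the C-chordal decomposition will require a further telescoping argument combining both substitutions.
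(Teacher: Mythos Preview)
Your proposal is correct and follows essentially the same strategy as the paper in all three parts: the flat-basis decomposition of the annihilator for (a), a surjection to $\GMA{N}$ combined with a Hilbert function count for (b), and induction on circuit size via a C-chordal split for (c), with your two-substitution bookkeeping in (c) amounting to the paper's identity once one observes that $A\setminus a$ is itself a circuit of $M/a$. The one place that needs more care is (b): you assert the surjection $\GMA{M}\to\GMA{N}$ sending $y_a\mapsto 0$, $y_i\mapsto y_{[i]}$ without checking it is well-defined on $\GMA{M}$, and this verification (that every generator of $Q$ maps to zero, with separate arguments depending on whether $a$ lies in the closure of the relevant independent set) is not entirely trivial and occupies most of the paper's proof of that part.
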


\begin{proof}
(a) First, we will show that $(0:_{\GMA{M}} y_a)$ is generated by $y_a$ and the binomials $y_I - y_{I'}$, where $I, I'$ are independent sets of $M/a$ with $\cl_{M/a}(I) = \cl_{M/a}(I')$.  Clearly, $(0:_{\GMA{M}} y_a)$ contains each of the preceding elements since $y_a^2 = 0$ in $\GMA{M}$, and if $I$ and $I'$ are independent sets of $M/a$ with $\cl_{M/a}(I) = \cl_{M/a}(I')$, then $I \cup a$ and $I' \cup a$ are independent sets of $M$ with equal closures in $M$ by \cite[3.1.8, 3.1.12]{Oxley} so that $y_a(y_I - y_{I'}) = y_{I \cup a} - y_{I' \cup a} = 0$ in $\GMA{M}$.  Conversely, if for each flat $F$ of $M$ we choose an independent set $I_F$ with $\cl(I_F) = F$, we know that the monomials $y_{I_F}$ form a $\kk$-basis for $\GMA{M}$.  Let $f \in {(0:_{\GMA{M}} y_a)}$.  We may assume that $f$ is homogeneous of degree $r$ so that $f = \sum_{\rk F = r} c_Fy_{I_F}$ for some $c_F \in \kk$. It follows that 
\[ 
0 = y_af = \sum_{\substack{\rk G = r + 1 \\ a \in G}} \, \Big(\sum_{F \vee a = G} c_F\Big)y_{I_G}
\]
so that $\sum_{F \vee a = G} c_F = 0$ for each flat $G$ of rank $r + 1$ with $a \in G$.  Consequently, we have:
\begin{align*}
    f &= \sum_{a \in F} c_F y_{I_F} + \sum_{a \notin F} c_Fy_{I_F} \\
    &= \sum_{a \in F} c_F y_{I_F} + \sum_{\substack{\rank G = r + 1 \\ a \in G}}\Big(\sum_{F \vee a = G} c_Fy_{I_F}\Big).
\end{align*}
For each flat $F$ in the sum on the left above, we can choose a basis for $F$ of the form $I \cup a$ so that $y_{I_F} = y_Iy_a$ in $\GMA{M}$.  On the other hand, for each flat $G$ in the sum on the right above, we can choose a designated flat $F_0$ of rank $r$ with $a \notin F_0$ and $F_0 \vee a = G$, and it is easily seen from the fact that $\sum_{F \vee a = G} c_F = 0$ that the sum $\sum_{F \vee a = G} c_Fy_{I_F}$ is a sum of binomials of the form $y_{I_F} - y_{I_{F_0}}$ for each flat $F$ with $F \vee a = G$ with $F \neq F_0$.  In addition, for each such flat $F$, $a \notin F$ implies $I_F \cup a$ is independent so that $I_F$ is independent in $M/a$, and $$\cl_M(I_F \cup a) = F \vee a = F_0 \vee a = \cl_M(I_{F_0} \cup a)$$ implies $\cl_{M/a}(I_F) = \cl_{M/a}(I_{F_0})$.

From the preceding paragraph, we know that $(0 :_{\GMA{M}} y_a)$ contains all of the binomials $y_{C \setminus i} - y_{C \setminus j}$ for all circuits $C$ of $M/a$ and all $i, j \in C$.  On the other hand, an argument similar to the proof of part (b) of Proposition \ref{GMA:presentation} shows that $(0 :_{\GMA{M}} y_a)$ is contained in the ideal generated by $y_a$ and such circuit binomials.

(b)  We first recall that $N = \si(M/a)$ is a simple matroid whose ground set is the set of rank 1 flats of $M/a$.  Consider the surjective algebra map $\pi: S \to \GMA{N}$ given by sending $y_i \mapsto y_{\cl_{M/a}(i)}$ for $i \in E \setminus a$ and $y_a \mapsto 0$.  We will show that $\pi(Q) = 0$ so that there is an induced homomorphism $\GMA{M} \to \GMA{N}$.  Clearly, we have $\pi(y_a^2) = 0$ and $\pi(y_i^2) = y_{\cl_{M/a}(i)}^2 = 0$ for all $i \in E \setminus a$.  Let $I$ be an independent set of $M$.  If $a \in \cl_M(I)$, then either $a \in I$ so that $\pi(y_I) = 0$, or $I$ is a dependent set of $M/a$ so that $\rk_{M/a} I < \abs{I}$.  In the latter case, let $\si(I) = \{ \cl_{M/a}(i) \mid i \in I \}$, and note that $\pi(y_I)$ is divisible by $y_{\si(I)}$.  If $\si(I)$ is dependent in $N$, then $y_{\si(I)} = 0$ in $\GMA{N}$ by Proposition \ref{GMA:presentation}, so we may further assume that $\si(I)$ is independent.  This implies that 
\[ \abs{\si(I)} = \rk_{M/a} \bigvee_{b \in \si(I)} b \leq \rk_{M/a} \cl_{M/a}(I) = \rk_{M/a} I < \abs{I}
\]
which is only possible if there are $i, j \in I$ such that $\cl_{M/a}(i) = \cl_{M/a}(j)$.  But then $\pi(y_I) = 0$ since it is divisible by $y_{\cl_{M/a}(i)}^2$.

Now, consider a binomial $y_I - y_{I'}$ in $S$, where $I, I'$ are independent sets of $M$ with $\cl_M(I) = \cl_M(I')$.  If $a \in \cl_M(I)$, the preceding paragraph shows that $\pi(y_I - y_{I'}) = 0$, so we may assume that $a \notin \cl_M(I)$.  In that case, $I$ and $I'$ are both independent sets of $M/a$ with $\cl_{M/a}(I) = \cl_{M/a}(I')$.  In that case, we note that $$\bigvee_{b \in \si(I)} b = \cl_{M/a}(\bigcup_{b \in \si(I)} b) = \cl_{M/a}(I)$$ in the lattice of flats of $M/a$ so that
\[ 
\abs{I} = \rk \cl_{M/a}(I) = \rk_{M/a} \bigvee_{b \in \si(I)} b \leq \abs{\si(I)}.
\]
As there is an obvious surjection of $I \to \si(I)$ by taking closures, it follows that the above inequality must be an equality so that $\si(I)$ is also independent in $N$, and in particular, the map $I \to \si(I)$ is a bijection.  Furthermore, we have $\cl_N(\si(I)) = \{ \cl_{M/a}(i) \mid i \in \cl_{M/a}(I)\}$.  Therefore $\pi(y_I - y_{I'}) = y_{\si(I)} - y_{\si(I')} = 0$, since $\cl_N(\si(I)) = \cl_N(\si(I'))$.  Hence, we have an induced homomorphism $\bar{\pi}: \GMA{M} \to \GMA{N}$, and moreover, the preceding argument shows that $\bar{\pi}((0 :_{\GMA{M}} y_a)) = 0$ so that we have an induced surjection $\GMA{M}/(0:_{\GMA{M}} y_a) \to \GMA{N}$.

We know that $\GMA{M}$ is spanned by the monomials $y_I$ for each independent set $I$ of $M$.  If $a \in \cl(I)$, then there is an independent set $I'$ with $a \in I'$ and $\cl(I) = \cl(I')$ so that $y_I = y_{I'}$ is divisible $y_a$ and, hence, is zero in $\GMA{M}/(0 :_{\GMA{M}} y_a)$.  As a result, we see that $\GMA{M}/(0 :_{\GMA{M}} y_a)$ is spanned by the monomials $y_I$ with $a \notin \cl(I)$.  Each such $I$ is also an independent set of $M/a$, and moreover, any two such monomials are identified in $\GMA{M}/(0 :_{\GMA{M}} y_a)$ if they have the same closure in $M/a$.  Thus, $\GMA{M}/(0 :_{\GMA{M}} y_a)$ has a spanning set of monomials corresponding bijectively with flats of $M/a$.  Since $M/a$ and $N = \si(M/a)$ have isomorphic lattices of flats \cite[1.7.5]{Oxley} and $\GMA{N}$ has a monomial basis corresponding to the flats of $N$, it follows that the map $\GMA{M}/(0 :_{\GMA{M}} y_a) \to \GMA{N}$ is an isomorphism.

(c) If $\{a, i, j\}$ is a circuit of $M$, then $\{i, j\}$ is a circuit of $M/a$ by \cite[3.1.11]{Oxley} so that $(0 :_{\GMA{M}} y_a)$ contains all of the linear forms $y_i - y_j$.  Conversely, let $C$ be a circuit of $M/a$.  Then either $C$ or $C \cup a$ is a circuit of $M$.  We must show for any $i, j \in C$, the binomial $y_{C \setminus i} - y_{C \setminus j}$ belongs to the ideal $L = (y_a) + (y_j - y_i \mid \{a, i, j\} \;\text{a circuit of $M$})$.  If $C$ is also a circuit of $M$, then $y_{C \setminus i} - y_{C \setminus j} = 0$ in $\GMA{M}$, so we may assume that $C \cup a$ is a circuit of $M$.  If $\abs{C \cup a} = 3$, there is nothing to prove, so we may assume $\abs{C \cup a} \geq 4$.  Since $M$ is C-chordal, we know there is an $e \notin C \cup a$ and circuits $A, B$ such that $A \cap B = \{e\}$ and $C \cup a = (A \setminus e) \sqcup (B \setminus e)$.  Since $M$ is a simple matroid, every circuit of $M$ has size at least three so that $\abs{A}, \abs{B} < \abs{C \cup a}$.  Suppose without loss of generality that $i \in A$.  As $\abs{B} \geq 2$, there is an $\ell \in B \setminus a \subseteq C$, and we have $$y_{C \setminus i} - y_{C \setminus j} = (y_{C \setminus i} - y_{C \setminus \ell}) - (y_{C \setminus \ell} - y_{C \setminus j}).$$  Consequently, it suffices to assume that $j \in B$.  Furthermore, we may assume without loss of generality that $a \in A$ so that $A \setminus a$ contains a circuit $D$ of $M/a$.  Note that $B   \setminus e \subseteq C \setminus i$ and $A \setminus \{e, a\} \subseteq C \setminus j$.  Additionally, we must have $e \in D$ since otherwise it would follow that $D \subseteq A \setminus \{e, a\} \subsetneq C$, contradicting that $C$ is a circuit of $M/a$. Similarly, we must have $i \in D$ since otherwise it would follow that $D \subseteq A \setminus {a, i} \subseteq C \setminus i$, contradicting that $C \setminus i$ is an independent set of $M/a$.  Then $D \setminus e \subseteq C \setminus j$, and we have
\begin{align*}
y_{C \setminus i} - y_{C \setminus j} &= y_{A \setminus \{i,e\}}(y_{B \setminus e} - y_{B \setminus j}) + y_{B\setminus \{j, e\}}y_{A \setminus (D \cup a)}(y_{D \setminus i} - y_{D \setminus e}) \\
&= y_{B\setminus \{j, e\}}y_{A \setminus (D \cup a)}(y_{D \setminus i} - y_{D \setminus e}) 
\end{align*}
in $\GMA{M}$, where $\abs{D \cup a} \leq \abs{A} < \abs{C \cup a}$.  Thus, $y_{C \setminus i} - y_{C \setminus j} \in L$ by an induction on the size of $C \cup a$.  
\end{proof}

\begin{lemma}\label{poincare:lemma}
Let $T$ be the $n$-trampoline and $B$ be the broken $n$-trampoline for some $n \geq 3$.  Set $a = v_1v_n$, $b = v_1w_n$, and $c = v_nw_n$.  Then:
\begin{enumerate}[label = \textnormal{(\alph*)}]
\item There is an algebra retract $\GMA{T} \onto \GMA{B}$ with kernel $(y_b, y_a - y_c) = (0 :_{\GMA{T}} y_b)$.
\item The Poincar\'e series satisfy $$\P^{\GMA{T}}(s, t) =  \dfrac{\P^{\GMA{B}}(s, t)}{1-st\Big(1 + \P^{\GMA{B}}_{\GMA{B}/(y_a)}(s, t)\Big)}.$$
\end{enumerate}
\end{lemma}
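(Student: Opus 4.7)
The plan is to apply Lemma \ref{GMAs:and:contraction} to the element $y_b \in \GMA{T}$ for part (a), and for part (b) to combine the resulting short exact sequence with a filtration argument and change-of-rings identities for the retract.

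For part (a), first observe that $T$ is chordal (a perfect elimination order is $w_1, \dots, w_n, v_1, \dots, v_n$), so $M(T)$ is C-chordal. Since $w_n$ is adjacent only to $v_1$ and $v_n$, the unique 3-cycle of $T$ containing $b = v_1w_n$ is $\{a, b, c\}$; thus Lemma \ref{GMAs:and:contraction}(c) yields $(0 :_{\GMA{T}} y_b) = (y_b, y_a - y_c)$. Contracting $b$ merges $v_1$ and $w_n$, so $a$ and $c$ become parallel, and the simplification $\si(M(T)/b)$ identifies them, yielding a matroid isomorphic to $M(B)$; Lemma \ref{GMAs:and:contraction}(b) then gives $\GMA{T}/(y_b, y_a - y_c) \cong \GMA{B}$. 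Finally, the section $\iota\colon \GMA{B} \to \GMA{T}$ defined by $y_e \mapsto y_e$ for $e \in E(B)$ is well-defined, since $B$ is an induced subgraph of $T$ so the circuits and equal-closure relations of $M(B)$ lift to $M(T)$; the composition $\pi \circ \iota$ is the identity on generators.

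For part (b), the starting point is the short exact sequence $0 \to K \to \GMA{T} \xrightarrow{\pi} \GMA{B} \to 0$ of $\GMA{T}$-modules with $K = (y_b, y_a - y_c)$. The key structural step is the two-step filtration $0 \subset y_b\GMA{T} \subset K$, whose successive quotients are $\GMA{B}(-1)$ and $(\GMA{B}/(y_a))(-1)$ as $\GMA{T}$-modules. The first isomorphism comes from multiplication by $y_b$ and $(0 :_{\GMA{T}} y_b) = K$. For the second, note that the residue of $y_a - y_c$ generates $K/y_b\GMA{T}$, and the triangle relation gives $y_a(y_a - y_c) = -y_ay_c = -y_by_c \in y_b\GMA{T}$, so $y_a$ annihilates the generator; a Hilbert function count (using the Hilbert series of $\GMA{T}$, $\GMA{B}$, and $\GMA{B}/(y_a)$ computed via Whitney numbers) confirms that the annihilator is exactly $(y_a)$.

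Taking $\Tor^{\GMA{T}}(\kk, -)$ of the two short exact sequences produced by the filtration, together with the change-of-rings factorizations $\P^{\GMA{T}}(s, t) = \P^{\GMA{T}}_{\GMA{B}}(s, t) \cdot \P^{\GMA{B}}(s, t)$ and $\P^{\GMA{T}}_{\GMA{B}/(y_a)}(s, t) = \P^{\GMA{T}}_{\GMA{B}}(s, t) \cdot \P^{\GMA{B}}_{\GMA{B}/(y_a)}(s, t)$ provided by the retract $\iota$, yields a system of linear identities. Eliminating $\P^{\GMA{T}}_K$ from the first pair gives $\P^{\GMA{T}}_{\GMA{B}}(1 - st) = 1 + st\P^{\GMA{T}}_{\GMA{B}/(y_a)}$; substituting the change-of-rings identity and solving for $\P^{\GMA{T}}_{\GMA{B}}$ gives $1/(1 - st(1 + \P^{\GMA{B}}_{\GMA{B}/(y_a)}))$, after which a final substitution delivers the stated formula.

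The main technical obstacle is justifying both the additivity of Poincar\'e series along the filtration (equivalently, vanishing of the connecting homomorphisms in the relevant $\Tor$ long exact sequences) and the change-of-rings factorizations. Both should follow from an explicit construction of a minimal $\GMA{T}$-free resolution of $\kk$ assembled from a $\GMA{B}$-free resolution of $\kk$, exploiting the splitting $\GMA{T} = \iota(\GMA{B}) \oplus K$ from part (a); a Shamash- or Tate-style iterated construction, adjoining new homological variables corresponding to $y_b$ and $y_a - y_c$, should produce a resolution whose Poincar\'e series agrees term-by-term with the claimed expression.
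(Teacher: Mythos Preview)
Part (a) is correct and essentially matches the paper's argument.

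For part (b), your filtration $0 \subset y_b\GMA{T} \subset K$ with subquotients $\GMA{B}(-1)$ and $(\GMA{B}/(y_a))(-1)$ is correct, and the change-of-rings identity $\P^{\GMA{T}}_{\GMA{B}/(y_a)} = \P^{\GMA{T}}_{\GMA{B}} \cdot \P^{\GMA{B}}_{\GMA{B}/(y_a)}$ does follow from Herzog's theorem (retracts are large) together with Levin's characterization of large maps, so that is not an obstacle. The genuine gap is the additivity $\P^{\GMA{T}}_K = s\P^{\GMA{T}}_{\GMA{B}} + s\P^{\GMA{T}}_{\GMA{B}/(y_a)}$, equivalently the vanishing of the connecting maps in $\Tor^{\GMA{T}}(\kk,-)$ for the sequence $0 \to \GMA{B}(-1) \to K \to (\GMA{B}/(y_a))(-1) \to 0$. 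You acknowledge this but offer only a speculative Shamash/Tate construction. Since $K$ is not a $\GMA{B}$-module (indeed $(y_a - y_c)^2 = -2y_by_c \neq 0$ in $\GMA{T}$), the sequence does not live over $\GMA{B}$ and there is no evident structural reason for these connecting maps to vanish; moreover, building an explicit \emph{minimal} $\GMA{T}$-resolution of $\kk$ with the predicted shape is essentially equivalent to proving the formula itself, so the suggestion is circular.

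The paper sidesteps this by working with the quotient $\GMA{T}/(y_b)$ rather than the kernel $K$. The key structural observation is that
\[
\GMA{T}/(y_b) \;\cong\; \GMA{B}[y_c]/(y_c^2, y_ay_c) \;\cong\; \GMA{B} \ltimes \GMA{B}/(y_a),
\]
a Nagata idealization, so Gulliksen's formula gives $\P^{\GMA{T}/(y_b)}$ directly in terms of $\P^{\GMA{B}}$ and $\P^{\GMA{B}}_{\GMA{B}/(y_a)}$. Then Levin's result that largeness of $\GMA{T} \to \GMA{T}/(0:y_b)$ forces largeness of $\GMA{T} \to \GMA{T}/(y_b)$ supplies the factorization $\P^{\GMA{T}} = \P^{\GMA{T}}_{\GMA{T}/(y_b)} \cdot \P^{\GMA{T}/(y_b)}$, and the elementary identity $\P^{\GMA{T}}_{\GMA{T}/(y_b)} = 1 + st\,\P^{\GMA{T}}_{\GMA{B}}$ coming from $0 \to \GMA{B}(-1) \to \GMA{T} \to \GMA{T}/(y_b) \to 0$ finishes the derivation. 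The combination of the idealization structure with Gulliksen's theorem is precisely what replaces your unproved additivity step with established machinery.
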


In the proof we will use
the tool  of a large homomorphism.  Levin \cite{large:homomorphisms} defines a surjective ring homomorphism $A \to A'$ to be \term{large} if the induced homomorphism $\Tor_*^A(\kk, \kk) \to \Tor^{A'}_*(\kk, \kk)$ is surjective.  Critically, for us, he shows that this is equivalent to having a factorization of Poincar\'{e} series
\[
\P^A(s, t) = \P^A_{A'}(s, t)\P^{A'}(s, t).
\]

\begin{proof}
(a) First, we observe that the $n$-trampoline $T$ is chordal so that $\GMA{T}$ is quadratic by Theorem~\ref{quadratic:GMA:iff:chordal}. 

Let $S_T = \kk[y_i \mid i \in E(T)]$ and $S_B = \kk[y_i \mid i \in E(B)]$, and denote the defining ideals of $\GMA{T}$ and $\GMA{B}$ in $S_T$ and $S_B$ respectively by $Q_T$ and $Q_B$.  Since $\GMA{T}$ is quadratic, we know $Q_T$ is generated by the squares $y_i^2$ for each $i \in E(T)$ and the binomials $y_{C \setminus i} - y_{C \setminus j}$ for each 3-cycle $C$ of $T$ and $i, j \in C$.   As $\{a, b, c\}$ is the only 3-cycle of $T$ not contained in $B$, it follows that 
\begin{equation} \label{defining:ideal:of:trampoline}
Q_T = Q_B S_T + (y_b^2, y_c^2, y_b(y_a - y_c), y_a(y_b - y_c)).
\end{equation}

Hence, we have a natural injection $\GMA{B} \to \GMA{T}$ which admits a retract $\pi: \GMA{T} \to \GMA{B}$ by sending $y_b \mapsto 0, y_c \mapsto y_a$.

It remains to show that $\ker \pi = (y_b, y_a - y_c) = (0 :_{\GMA{T}} y_b)$.   Since $(Q_T, y_b, y_a - y_c) = Q_BS_T + (y_b, y_a - y_c)$, it is clear that $\GMA{T}/(y_b, y_a - y_c) \iso \GMA{B}$, from which it easily follows that $\Ker \pi = (y_b, y_a - y_c)$.  To see that $(y_b, y_a - y_c) = (0 :_{\GMA{T}} y_b)$, we note that $M(T)$ is C-chordal since $T$ is a chordal graph, and $\{a, b, c\}$ is the only triangle of $T$ containing $b$.  Hence, the equality follows from part (b) of Proposition \ref{GMAs:and:contraction}.

(b)  Since $\pi:\GMA{T} \to \GMA{T}/(0: y_b) \iso \GMA{B}$ is a retract, it follows from \cite[Theorem 1]{algebra:retracts:and:Poincare:series} that $\pi$ is a large homomorphism in the sense of \cite[1.1]{large:homomorphisms}.  It then follows from \cite[2.1]{large:homomorphisms} that $\GMA{T} \to \GMA{T}/(y_b)$ is also large.  Hence, we have equalities of Poincar\'e series:
\[ \P^{\GMA{T}}(s, t) = \P^{\GMA{T}}_{\GMA{T}/(0: y_b)}(s, t)\P^{\GMA{B}}(s, t) \qquad \text{and} \qquad \P^{\GMA{T}}(s, t) = \P^{\GMA{T}}_{\GMA{T}/(y_b)}(s, t)\P^{\GMA{T}/(y_b)}(s, t). \]
From \eqref{defining:ideal:of:trampoline}, we see that $(Q_T, y_b) = Q_BS_T + (y_b, y_c^2, y_ay_c)$ so that 
\[ \GMA{T}/(y_b) \iso \GMA{B}[y_c]/(y_ay_c, y_c^2) \iso \GMA{B} \ltimes \GMA{B}/(y_a), \]
where $\GMA{B} \ltimes \GMA{B}/(y_a)$ denotes the Nagata idealization of $\GMA{B}/(y_a)$ as a module over $\GMA{B}$. (For example, see the proof of \cite[3.3]{QGKP1} for the last isomorphism.)  The idealization $\GMA{B} \ltimes \GMA{B}/(y_a)$ also has $\GMA{B}$ as a retract, and by \cite[Theorem 2]{Gulliksen} we have
\[ \P^{\GMA{B} \ltimes \GMA{B}/(y_a)}(s, t) = \frac{\P^{\GMA{B}}(s, t)}{1-st\P^{\GMA{B}}_{\GMA{B}/(y_a)}(s, t)}.\]
Finally, from the exact sequence $$0 \to \GMA{T}/(0 : y_b)(-1) \stackrel{y_b}{\to} \GMA{T} \to \GMA{T}/(y_b) \to 0\,$$ we have $$\P^{\GMA{T}}_{\GMA{T}/(y_b)}(s, t) = 1 + st\P^{\GMA{T}}_{\GMA{T}/(0: y_b)}(s, t).$$  Combining all of the preceding equalities yields
\begin{align*}
\P^{\GMA{T}}(s, t) &= 
\left(1 + st\frac{\P^{\GMA{T}}(s, t)}{\P^{\GMA{B}}(s, t)}\right)\left( \frac{\P^{\GMA{B}}(s, t)}{1-st\P^{\GMA{B}}_{\GMA{B}/(y_a)}(s, t)} \right) \\[1ex]
&= \frac{\P^{\GMA{B}}(s, t)}{1-st\P^{\GMA{B}}_{\GMA{B}/(y_a)}(s, t)} + \frac{st\P^{\GMA{T}}(s, t)}{1-st\P^{\GMA{B}}_{\GMA{B}/(y_a)}(s, t)}
\end{align*}
from which it follows that
\[ 
\P^{\GMA{T}}(s, t) =  \frac{\P^{\GMA{B}}(s, t)}{1-st(1 + P^{\GMA{B}}_{\GMA{B}/(y_a)}(s, t))}.
\qedhere \]
\end{proof}

\begin{proof}[Proof of Theorem \ref{Koszul:imples:strongly:chordal}]
We prove the contrapositive, using the forbidden minor characterization of strongly chordal graphs to show that if $G$ is not strongly chordal, then $\GMA{G}$ is not Koszul. If $G$ is not chordal, then $\GMA{G}$ is not even quadratic by Theorem \ref{quadratic:GMA:iff:chordal}, so we may assume that $G$ is chordal but not strongly chordal.  In that case, Theorem \ref{strongly:chordal:graphs} implies that $G$ has an induced $n$-trampoline $T$ for some $n \geq 3$.  We claim that $\GMA{T}$ is an algebra retract of $\GMA{G}$ so that there is an equality of Poincar\'e series $$\P^{\GMA{G}}(s, t) = \P^{\GMA{G}}_{\GMA{T}}(s, t)\P^{\GMA{T}}(s, t).$$  To see this, note that $\GMA{G}$ is quadratic since $G$ is chordal, and every 3-cycle of $G$ not contained in $T$ must have at least two edges not contained in $T$ as $T$ is an induced subgraph of $G$. Writing $\GMA{G} = S/Q_G$ for $S = \kk[y_i \mid i \in E(G)]$, we see that
\begin{align*} 
Q_G &= Q_TS + \Big(y_i(y_j - y_k), y_j(y_i - y_k) \mid \{i, j, k\} \;\text{a triangle of $G$, $i, j \notin E(T)$}\Big) 
 + (y_i^2 \mid i \notin E(T)),
\end{align*}
and so, it follows that the natural injection $\GMA{T} \to \GMA{G}$ admits a retract $\GMA{G} \to \GMA{T}$ by sending $y_i \mapsto 0$ for all $i \notin E(T)$.  By the above equality of Poincar\'e series, it suffices to show that $\GMA{T}$ is not Koszul to prove that $\GMA{G}$ is not Koszul.

We show that $\GMA{T}$ is not Koszul by induction on $n \geq 3$. Let $\GMA{B}$ and $a$ be as in the preceding lemma.  By part (b) of the lemma, we know that $\GMA{T}$ is Koszul if and only if $y_a\GMA{B}$ has a linear free resolution over $\GMA{B}$.  Since the broken $n$-trampoline $B$ is chordal, we know that its cycle matroid is C-chordal, and so, Proposition \ref{GMAs:and:contraction} yields that $$(0 :_R y_a) = (y_a)+ (y_i - y_j \mid \{a, i, j\} \;\text{a cycle of}\; B).$$ Hence, $\GMA{T}$ being Koszul is further equivalent to $(0 :_{\GMA{B}} y_a)$ having a linear free resolution over $\GMA{B}$.  We will prove this is not the case.

Suppose that $n = 3$, and let $B$ be the broken $3$-trampoline with edges labeled as shown below.
\vglue .2cm
\begin{center}
\edgelabeledgraph{
    -1/0/v_1//-120,
    0/2/v_2//90,
    1/0/v_3//-60,
    -2/2/w_1//120,
    2/2/w_2//60
    }{
    v_1/v_2/0.5/above/b, 
    v_1/v_3/0.5/below/a, 
    v_2/v_3/0.5/above/c,
    v_1/w_1/0.5/below/d,
    w_1/v_2/0.5/above/e,
    w_2/v_2/0.5/above/f,
    w_2/v_3/0.5/below/g}
\end{center}
Abusing notation slightly, we write $\GMA{B} = S/Q_B$, where $S = \kk[a,b,\dots, g]$.  We must show that $(0:_{\GMA{B}} a) = (a, b - c)$ does not have a linear resolution over $\GMA{B}$.  Let $R'' = \kk[b,c,,\dots, g]/I$, where 
\[
I = (d^2+2be, bd, de, b^2, e^2, g^2 + 2cf, cg, fg, c^2, f^2).
\]
After a change of coordinates sending 
$$a \mapsto a + b + c,\ \  d \mapsto b + d + e, \ \ g \mapsto c + f + g,$$ we see that 
\[ 
\GMA{B} \iso R' := \frac{S}{(a^2 + 2bc, ab, ac) + I} \iso \frac{R''[a]}{(a^2 + 2bc, ab, ac)}.
\]
Thus, it suffices to show that $(a + b + c, b - c)$ does not have a linear resolution over $R'$.  In fact, we will show that $((a+ b + c) :_{R'} b - c)$ has a minimal quadratic generator so that $(a + b + c, b - c)$ cannot have a linear presentation over $R'$. 

Since the ring $A = R''[a]/(a^2 + 2bc)$ is a free $R''$-module with basis $1, a$ and we have $a^2b = -2b^2c = 0$ and $a^2c = -2bc^2 = 0$ in $A$, it is easily checked that $(ab, ac)A = R''ab + R''ac$, from which it follows that $R' \iso R'' \oplus R''/(b, c)$ as an $R''$-module. We claim that 
\[ ((a + b + c) :_{R'} b - c) = ((bc) :_{R''} b - c)R' +  (a).\]
That the latter ideal is contained in the former follows from the facts that $a(b - c) = 0$ and $bc = (a + b + c)b$ in $R'$.  On the other hand, if $h \in ((a + b + c):_{R'} b - c)$ and we write $h = h_0 + ah_1$ for some $h_i \in R''$, then $(b - c)h = (b - c)h_0 \in (a + b + c)R'$ so that $$(b - c)h_0 = (a + b + c)(u_0 + au_1) = (b + c)u_0 -2bcu_1 + au_0.$$  By the above observation about the $R''$-module structure of $R'$, it follows that $u_0 \in (b, c)R''$ and that $(b - c)h_0 = (b + c)u_0 -2bcu_1 \in (bc)R''$ since $b^2 = c^2 = 0$, which establishes the claim. Hence, it further suffices to show that $((bc):_{R''} b - c)$ has a minimal quadratic generator.

We note that $R'' \iso P \tensor P$, where $$P = \kk[b, d, e]/(d^2 +2be, bd, de, b^2, e^2)$$ is easily seen to have a $\kk$-basis consisting of $1, b, d, e, be$. Let 
$$\ell = \alpha_1b + \alpha_2d + \alpha_3e + \alpha_4c + \alpha_5f + \alpha_6g$$ be a linear form in $R''$.  Then
\begin{align*} 
\ell b &= \alpha_3be + \alpha_4bc + \alpha_5bf + \alpha_6bg \\
\ell c &= \alpha_1bc + \alpha_2cd + \alpha_3ce + \alpha_5cf
\end{align*}
expresses the preceding products as linear combinations of distinct basis elements for $R''$.  As a consequence, we see that $((bc) :_{R''} b - c)_1$ is spanned by $b, c$, and $dg \in (0 :_{R''} b - c)$ is minimal quadratic generator for $((bc) :_{R''} b - c)$ as it is a basis element of $R''$ distinct from the basis elements appearing in the above products.  Thus, the graded M\"obius algebra of the 3-trampoline is not Koszul. 

Assume now that $n \geq 4$ and that the graded M\"obius algebra $\GMA{T'}$ of the $(n - 1)$-trampoline $T'$ is not Koszul.  Let $T$ and $B$ be the $n$-trampoline and the broken $n$-trampoline respectively.  We set $e_{i,j} = v_iv_j$, $b_k = v_kw_k$, and $c_k = v_{k+1}w_k$ in $B$, and abusing notation, we write $\GMA{B} = S/Q_B$, where 
\[ S = \kk[e_{i,j}, b_k, c_k \mid 1 \leq i < j \leq n, 1 \leq k \leq n-1]. \]  
As already noted above, it suffices to show that the ideal $(0 :_{\GMA{B}} e_{1,n})$ does not have linear free resolution over $\GMA{B}$ to show that $\GMA{T}$ is not Koszul.  If $(0 :_{\GMA{B}} e_{1,n})$ did have a linear free resolution over $\GMA{B}$, then since $\GMA{B}$ is Koszul by Corollary \ref{broken:trampolines:are:G-quadratic} it would follow from \cite[Theorem 2]{Koszul:algebras:and:regularity} that $\GMA{B}/(0 :_{\GMA{B}} e_{1,n})$ is also Koszul. However, by Proposition \ref{GMAs:and:contraction} and \cite[3.2.1]{Oxley}, we know that 
\[ \GMA{B}/(0 :_{\GMA{B}} e_{1,n}) \iso R_{\si(M(B))/e_{1,n}} = R_{\si(M(B/e_{1,n}))}, \] 
and since $\si(M(B/e_{1,n}))$ is isomorphic to the cycle matroid of the graph obtained by removing all loops and identifying all parallel edges of $B/e_{1,n}$, we see that $\GMA{B}/(0 :_{\GMA{B}} e_{1,n}) \iso \GMA{T'}$.  Hence, by induction, we have that $\GMA{T}$ is not Koszul.
\end{proof}

We summarize the more refined  characterizations of Koszulness for graded M\"obius and Orlik-Solomon algebras of graphic matroids in the Figure \ref{graphic:quadracity:implications} below.  By Figure~\ref{graphic:quadracity:implications}, if $M$ is a graphic matroid that is strongly T-chordal, then $M$ is supersolvable. 

\begin{figure}[htb]
    \begin{center}
    \begin{tikzcd}[column sep = 3em, row sep = 3.5 em]
    \substack{\text{\normalsize $\OS{G}$} \\ \text{\normalsize quadratic GB}}
    \dar[Leftrightarrow]
    &
    \substack{\text{\normalsize $M(G)$} \\ \text{\normalsize supersolvable}}
    \lar[Leftrightarrow, swap]
    &
    \substack{\text{\normalsize $\GMA{G}$} \\ \text{\normalsize quadratic GB}} \dar[Leftrightarrow]
    &
    \substack{\text{\normalsize $M(G)$} \\ \text{\normalsize strongly T-chordal}}
    \lar[Leftrightarrow]
    \\
    \OS{G} \,\text{Koszul}
    \dar[Leftrightarrow]
    &
    &
    \GMA{G} \, \text{Koszul} \dar[Rightarrow] \arrow[Leftrightarrow]{r}
    &
    G \; \text{strongly chordal}
    \arrow[Leftrightarrow]{u}
    \\
    \OS{G} \,\text{quadratic}
    &
    G \; \text{chordal}  
    \arrow[Leftrightarrow]{l}
    \arrow[Leftrightarrow]{uu}
    \rar[Leftrightarrow, swap]
    &
    \GMA{G} \, \text{quadratic} 
    \end{tikzcd}
    \end{center}
    \caption{Chordality and Quadracity Properties for Graded M\"obius and Orlik-Solomon Algebras of Graphic Matroids}
    \label{graphic:quadracity:implications}
\end{figure}
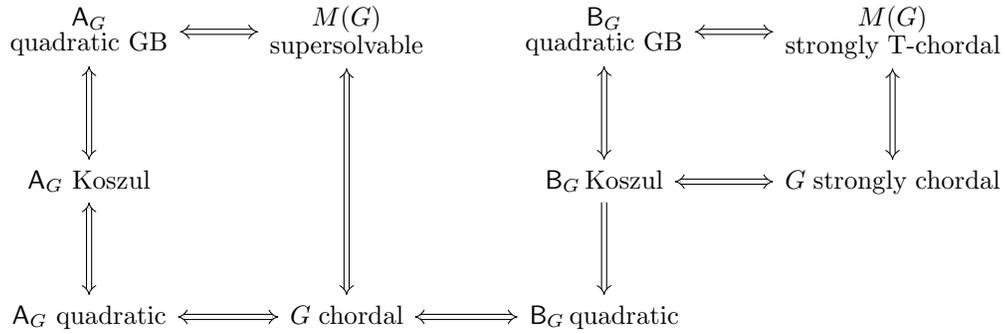

 \begin{prob}
 If $M$ is an arbitrary matroid that is strongly T-chordal, is $M$ supersolvable?
 \end{prob}

For both Orlik-Solomon algebras and graded M\"obius algebras of graphic matroids, the Koszul property is equivalent to the defining ideal having a quadratic Gr\"obner basis in the standard presentation.  This equivalence for Orlik Solomon algebras of arbitrary arrangements or matroids is an open question.  The following example shows that these notions are distinct for graded M\"obius algebras of arbitrary matroids.

\begin{example}\label{ag23}
    If $M$ is a graphic matroid, Figure~\ref{graphic:quadracity:implications} shows that $\OS{M}$ is Koszul whenever $\GMA{M}$ is Koszul, and the  converse fails for chordal graphs that are not strongly chordal.  Hence, $\GMA{M}$ can be viewed as a finer invariant of $M$ than $\OS{M}$ when $M$ is graphic.  However, this is not the case for arbitrary matroids.  
    
    Let $AG(2,3)$ be the matroid associated to the affine plane over $\ZZ/3\ZZ$.  This is a rank $3$ matroid on $9$ elements whose Orlik-Solomon algebra is quadratic but not Koszul; see \cite[Example 4.5]{Irena:hyperplane:arrangements}.  However, a filtration argument shows that $\GMA{AG(2,3)}$ is Koszul.  On the other hand, it is easy to check that the defining ideal of $\GMA{AG(2,3)}$ does not have a quadratic Gr\"obner basis with respect to any monomial order, meaning that not all Koszul graded M\"obius algebras have quadratic Gr\"obner bases in the natural presentation.  We note that $AG(2,3)$ is C-chordal, but an exhaustive computer search shows that it is not strongly T-chordal.  Thus C-chordality does not imply strong T-chordality in general.
\end{example}

\section{Linearity for finitely many steps} \label{open:problems}

\noindent
We conclude the paper by raising an open problem.  Roos \cite{Roos93} constructed a family of quadratic $\kk$-algebras $\{A_n\}$ indexed over the natural numbers $n \geq 2$ for which the resolution of $\kk$ over $A_n$ is linear for exactly $n$ steps.  Thus, one cannot check the Koszul property definitively by computing the first few (tens or even billions of) steps in the resolution of the ground field.  While all of the rings $A_n$ are Artinian with identical Hilbert functions, the proof of the behavior of the resolution is intricate.  Computations suggest that graded M\"obius algebras may provide a  natural family of quadratic algebras with even more extreme homological behavior than Roos' family.

Specifically, let $R_n$ be the graded M\"obius algebra of an $n$-trampoline for any $n \geq 3$.  It follows from Theorem~\ref{Koszul:imples:strongly:chordal} that $R_n$ is not Koszul for every $n$. Computation with Macaulay2 shows that for $n \leq 6$, the Betti table of $\kk$ over $R_n$ has the following form:\vspace{2ex}

\begin{center}
\begin{tabular}{|c|c|}
\hline
$n = 3$ & $n = 4$ \\
\hline
    \begin{minipage}{0.45\textwidth}
    \vspace{2 ex}
    \begin{center}
    \renewcommand{\arraystretch}{1.1}
\begin{tabular}{c|ccccc}
        & 0 & 1 & 2 & 3 & 4\\
       \hline
       0 & 1 & 9 & 53 & 260 & 1,156 \\
       1 &--&--&--&--&1
       \end{tabular}
               \end{center}
        \vspace{2 ex}
       \end{minipage}
&
    \begin{minipage}{0.45\textwidth}
    \vspace{2 ex}
    \begin{center}
    \renewcommand{\arraystretch}{1.1}
    \begin{tabular}{c|cccccc}
        & 0 & 1 & 2 & 3 & 4&5\\
       \hline
       0 & 1 & 14 & 121 & 841 & 5,191 & 29,886 \\
       1 &--&--&--&--&--&--\\
       2 &--&--&--&--&--&1
       \end{tabular}
                      \end{center}
        \vspace{2 ex}
       \end{minipage}      
       \\
       \hline 
\end{tabular}
\\[2ex]

\begin{tabular}{|c|c|}
\hline
$n = 5$ & $n = 6$ \\
\hline
    \begin{minipage}{0.45\textwidth}
    \vspace{2 ex}
    \begin{center}
    \renewcommand{\arraystretch}{1.1}
    \begin{tabular}{c|ccccccc}
          & 0 & 1 & 2 & 3 & 4 & 5 & 6 \\
       \hline
       0 & 1 & 20 & $*$ & $*$ & $*$ & $*$ & 1,083,885 \\
       1 &--&--&--&--&--&--&--\\
       2 &--&--&--&--&--&--&--\\
       3 &--&--&--&--&--&--&1
       \end{tabular}
                             \end{center}
        \vspace{2 ex}
       \end{minipage}      
&
    \begin{minipage}{0.45\textwidth}
    \vspace{2 ex}
    \begin{center}
    \renewcommand{\arraystretch}{1.1}
    \begin{tabular}{c|cccccccc}
      & 0 & 1 & 2 & 3 & 4 & 5 & 6 & 7 \\
       \hline
       0 & 1 & 27 & $*$ & $*$ & $*$ & $*$ & $*$ & 51,581,417 \\
       1 &--&--&--&--&--&--&--&--\\
       2 &--&--&--&--&--&--&--&--\\
       3 &--&--&--&--&--&--&--&--\\
       4 &--&--&--&--&--&--&--&1
       \end{tabular}
                             \end{center}
        \vspace{2 ex}
       \end{minipage}      
       \\
\hline
\end{tabular}
\end{center}
\vspace{2ex}
Furthermore, unlike Roos's examples which rely heavily on characteristic zero methods, the Betti tables of $\kk$ over $R_n$ seem to be independent of the characteristic of the base field. It is natural to ask if  these patterns hold for  all $n \geq 3$:

\begin{prob}
Consider $R_n$ over a ground field $\kk$.
Do the following properties hold for every  $n \geq 3$?
\begin{enumerate}[label = \textnormal{(\alph*)}]
\item The minimal free  resolution of $\kk$ over $R_n$ is linear for exactly $n$ steps, that is, the first non-linear Betti number appears in homological degree $n+1$.
\item The homologically-first nonlinear syzygy  of $\kk$ over $R_n$ has degree $(n+1)+(n-2)=2n-1$, so in the Betti table it appears in the $(n-2)$-th strand
(whereas the first nonlinear syzygy for Roos' family always appears in the first strand).
\item
The homologically-first non-linear Betti number of 
 $\kk$ over $R_n$ is equal to 1. 
\end{enumerate}

\end{prob}

\section*{Acknowledgements}

The authors thank Sophie Spirkl for very helpful conversations and thank Michael DiPasquale for pointing the authors to the paper by Tran and Tsujie.  Computations with Macaulay2 \cite{M2} were very useful while working on this project.  Finally, the authors thank the anonymous referee for comments that helped improve the exposition in this paper.  LaClair was partially supported by National Science Foundation grant DMS--2100288 and by Simons Foundation Collaboration grant for Mathematicians \#580839.  Mastroeni was partially supported by an AMS-Simons Travel Grant.  McCullough was partially supported by National Science Foundation grants DMS--1900792 and  DMS--2401256.  Peeva was partially supported by National Science Foundation grants DMS--2001064 and DMS-2401238.

\end{spacing}

\end{document}